\documentclass[reqno]{amsart}
\usepackage{graphicx}
\usepackage{caption}
\usepackage{subcaption}
\usepackage{color}
\usepackage{pdfpages}

\usepackage{amssymb}
\usepackage{mathrsfs}
\usepackage{enumitem}
\usepackage{hyperref}
\usepackage{isomath}

\begin{document}

\newtheorem{thm}{Theorem}[section]
\newtheorem*{thm1}{Theorem}
\newtheorem{lem}[thm]{Lemma}
\newtheorem{cor}[thm]{Corollary}
\newtheorem{add}[thm]{Addendum}
\newtheorem{prop}[thm]{Proposition}
\theoremstyle{definition}
\newtheorem{defn}[thm]{Definition}
\newtheorem{claim}[thm]{Claim}
\newtheorem*{mainthm}{Theorem 1.1}
\theoremstyle{remark}
\newtheorem{rmk}[thm]{Remark}
\newtheorem{ex}[thm]{Example}
\newtheorem{conj}[thm]{Conjecture}

\newcommand{\CC}{\mathbb{C}}
\newcommand{\RR}{\mathbb{R}}
\newcommand{\DD}{\mathbb{D}}

\newcommand{\ZZ}{\mathbb{Z}}
\newcommand{\QQ}{\mathbb{Q}}
\newcommand{\NN}{\mathbb{N}}
\newcommand{\FF}{\mathbb{F}}
\newcommand{\PP}{\mathbb{P}}
\newcommand{\CmodTwoPiIZ}{{\mathbf C}/2\pi i {\mathbf Z}}
\newcommand{\Cnozero}{{\mathbf C}\backslash \{0\}}
\newcommand{\Cinfty}{{\mathbf C}_{\infty}}
\newcommand{\RPnminustwo}{\mathbb{RP}^{n-2}}

\newcommand{\SLtwoC}{\mathrm{SL}(2,{\mathbb C})}
\newcommand{\SLtwoR}{\mathrm{SL}(2,{\mathbb R})}
\newcommand{\PSLtwoC}{\mathrm{PSL}(2,{\mathbb C})}
\newcommand{\PSLtwoR}{\mathrm{PSL}(2,{\mathbb R})}
\newcommand{\SLtwoZ}{\mathrm{SL}(2,{\mathbb Z})}
\newcommand{\PSLtwoZ}{\mathrm{PSL}(2,{\mathbb Z})}
\newcommand{\PGLtwoQ}{\mathrm{PGL}(2,{\mathbb Q})}

\newcommand{\A}{{\mathcal A}}
\newcommand{\B}{{\mathcal B}}
\newcommand{\C}{{\mathcal X}}
\newcommand{\D}{{\mathcal D}}
\newcommand{\E}{{\mathcal E}}
\newcommand{\F}{{\mathcal F}}

\newcommand{\OO}{{\mathcal O}}

\newcommand{\MCG}{\mathcal{MCG}}
\newcommand{\EE}{\mathbb{E}^2}
\newcommand{\HH}{\mathbb{H}^2}
\newcommand{\HHH}{\mathbb{H}^3}
\newcommand{\TTT}{\mathcal{T}}
\newcommand{\QQQ}{\mathcal{Q}}
\newcommand{\SSS}{\mathcal{S}}
\newcommand{\tr}{{\hbox{tr}\,}}
\newcommand{\Hom}{\mathrm{Hom}}
\newcommand{\Aut}{\mathrm{Aut}}
\newcommand{\Inn}{\mathrm{Inn}}
\newcommand{\Out}{\mathrm{Out}}
\newcommand{\SL}{\mathrm{SL}}
\newcommand{\BQ}{\rm{BQ}}
\newcommand{\Id}{\rm{Id}}
\newcommand{\Width}{\rm{Width}}

\newcommand{\setn}{{[n]}}
\newcommand{\powern}{{P(n)}}
\newcommand{\nck}{{C(n,k)}}

\newcommand{\hatI}{{\hat{I}}}
\newcommand{\TkDelta}{{T^{|k|}(\Delta)}}
\newcommand{\vecDelta}{{\vec{\Delta}_{\phi}}}

\newcommand{\Tabstwo}{{T^{|2|}(\Delta)}}
\newcommand{\Tnminusone}{{T^{|n-1|}(\Delta)}}
\newcommand{\Hur}{{\mathcal{H}}}
\newcommand{\JW}{{\hbox{JW}}}
\newcommand{\wt}{{\hbox{wt}}}
\newcommand{\Deltaone}{{\Delta^{(1)}}}
\newcommand{\Deltatwo}{{\Delta^{(2)}}}
\newcommand{\Deltathree}{{\Delta^{(3)}}}
\newcommand{\Deltan}{{\Delta^{(n)}}}

\newenvironment{pf}{\noindent {\it Proof.}\quad}{\square \vskip 10pt}

\title[Pseudomodular groups]{Hyperbolic jigsaws and families  of pseudomodular groups I}
\author[B. Lou, S.P. Tan and A.D. Vo ]{Beicheng Lou, Ser Peow Tan and  Anh Duc Vo }

\address{National University of Singapore \\
	Singapore 119076} \email{lbc45123@hotmail.com}
\address{Department of Mathematics \\
	National University of Singapore \\
	Singapore 119076} \email{mattansp@nus.edu.sg}
\address{National University of Singapore \\
	Singapore 119076} \email{voducht8728@gmail.com}

\subjclass[2000]{}

\thanks{The second author is partially supported by the National University
	of Singapore academic research grant R-146-000-235-114.}   

\date{\today}

% #############################################
%
%                  Abstract
%
% #############################################

 \begin{abstract}
 We show that there are infinitely many commensurability classes of pseudomodular groups, thus answering a question raised by Long and Reid. These are Fuchsian groups whose cusp set is all of the rationals but which are not commensurable to the modular group. We do this by introducing a general construction for the fundamental domains of Fuchsian groups obtained by gluing together marked ideal triangular tiles, which we call hyperbolic jigsaw groups. 

 \end{abstract}

 \maketitle
 \tableofcontents

 \vspace{10pt}
\section{Introduction}
We first recall the  definition of a pseudomodular group  which is the main motivation for this paper. Recall that the cusp set of a Fuchsian group $\Gamma \le \PSLtwoR$ is the set of all $x \in  \RR \cup \{\infty\}=\partial \HH$ such that the stabilizer of $x$ in $\Gamma$ is generated by a parabolic element.

\begin{defn}
	Let $\Gamma \le \PGLtwoQ$ be a discrete group such that $\HH/\Gamma$ is a complete hyperbolic surface of finite area. If $\Gamma$ is not commensurable with $\PSLtwoZ$, and $\Gamma$ has cusp set precisely $\hat \QQ=\QQ \cup \{\infty\}$, then $\HH/\Gamma$ is called a pseudomodular surface and $\Gamma$  a pseudomodular group.
	
\end{defn}

These were introduced by Long and Reid in \cite{LR}, the main surprise, and remarkable fact is that such  groups  exist. Clearly, the property of being pseudomodular carries over to the commensurability class, so  we can consider commensurability classes of pseuodomodular groups. Four different commensurability classes of pseudomodular groups were found in \cite{LR}, although a fifth, $\triangle(5/13,4)$, was also known to the authors (private communication), and found independently by Ayaka \cite{Ay} and Junwei Tan \cite{TJW}, using ideas of \cite{LR}.  Long and Reid asked if there were (in)finitely many commensurability classes of pseudomodular groups (  Question 2 of \S 6 in \cite{LR}). 

\medskip

So far,  not much progress has been made towards this question. Some partial answers were provided by Proskin \cite{Pr}, and a couple of new pseudomodular groups have been found by Ayaka \cite{Ay}.  On the other hand, using methods from number theory, Fithian \cite{Fi} was able to give conditions for a group to be non-pseudomodular. This however, does not answer the question if there are infinitely many commensurability classes of pseudomodular groups. 
%\marginpar{T: Norfleet's result is about fixed points of hyperbolics and will be referenced later, exact references for Ayaka, Junwei Tan, Proskin and Fithian to be added later}

\medskip
Our main result is the following answer to Long and Reid's question:

\begin{thm}\label{thm:Main}
	There exists infinitely many commensurability classes of pseudomodular groups and surfaces.
\end{thm}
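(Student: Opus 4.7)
The plan is to introduce what the abstract calls \emph{hyperbolic jigsaw groups}: Fuchsian groups whose fundamental domain is assembled from finitely many ideal triangular tiles in $\HH$ with vertices in $\hat\QQ$, together with a marking of the tile sides that prescribes the cusp widths of the resulting quotient. Given any combinatorial gluing pattern that matches the markings, the resulting side pairings are parabolic elements of $\PGLtwoQ$, and a Poincar\'e polygon argument will show that the group $\Gamma$ they generate is discrete and that $\HH/\Gamma$ is a complete finite-area hyperbolic surface. By construction $\Gamma\le\PGLtwoQ$, so its cusp set is automatically contained in $\hat\QQ$.

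With this framework in place, the theorem reduces to two independent tasks. First, one must produce an infinite family of jigsaw groups $\Gamma_n$ whose cusp set is \emph{all} of $\hat\QQ$; this is the analogue of the main result of Long--Reid \cite{LR}, and I would attack it by adapting their ``killer interval'' argument. The idea is to choose the jigsaw data so that, for every reduced fraction $p/q$, some parabolic of $\Gamma_n$ strictly decreases a suitable height function on $\hat\QQ$ (typically the denominator $q$), after which one closes by induction. The key new input the jigsaw picture should provide is that the parabolics visible from the cusp at $\infty$ can be read off directly from the combinatorics of the tile gluing, so that the ``denominator decreases'' property becomes an explicit, checkable covering condition on $\partial\HH$ by horocyclic projections of the cusps. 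Second, one must separate the $\Gamma_n$ into infinitely many commensurability classes; for this I would use the multiset of cusp widths, which is a commensurability invariant once normalised by a common scale. Engineering the jigsaws so that infinitely many distinct cusp-width multisets occur, and so that at least one width is not an integer, simultaneously distinguishes the $\Gamma_n$ from each other and from $\PSLtwoZ$ (whose cusp widths are all $1$).

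The principal obstacle is the first task: proving that the cusp set of a jigsaw group equals all of $\hat\QQ$. The pseudomodular examples in \cite{LR} were checked one at a time by delicate Euclidean-algorithm arguments, and the essential difficulty here is to extract a \emph{structural} criterion on the jigsaw data that guarantees the same conclusion uniformly across an infinite family. I expect the proof of such a criterion to be the core technical work of the paper: one must show that a combinatorial condition on the tiles (some covering or overlap property for the horocyclic projections of the cusps) implies an algorithmic reduction of the form ``every $p/q$ is moved by $\Gamma$ to a rational of smaller complexity''. Once that lemma is in place, producing an infinite family of jigsaws that both satisfy the criterion and exhibit infinitely many distinct cusp-width multisets is a matter of varying the rational parameters of the tiles; combined with the commensurability invariance of cusp widths, this establishes Theorem~\ref{thm:Main}.
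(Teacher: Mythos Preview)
Your proposal correctly identifies the broad architecture---a jigsaw construction together with a Long--Reid killer-interval argument for the cusp set---and this is indeed what the paper does. However, there are two genuine gaps in the second half of your plan.

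\medskip

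\textbf{Non-arithmeticity.} The definition of pseudomodular requires that $\Gamma$ be \emph{not} commensurable with $\PSLtwoZ$, i.e.\ non-arithmetic. You propose to secure this by arranging that ``at least one width is not an integer''. But the integral jigsaws actually used here all have integer fundamental interval $L=\sum m_i(2+n_i)$ (see Proposition~\ref{p:basic} and the discussion following it), and in any case finite-index subgroups of $\PSLtwoZ$ can have arbitrary integer cusp widths, so cusp widths cannot detect arithmeticity. The paper instead applies Takeuchi's criterion: for every $\SSS(1,2)$-jigsaw $J$ one exhibits an element $\gamma\in\Gamma_J$ with $(\tr\gamma)^2\notin\ZZ$ (Proposition~\ref{p:nonarith12}), computed from the $\pi$-rotations about the marked points on vertical sides of $\QQQ_J$. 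This step is essential and is absent from your outline.

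\medskip

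\textbf{Commensurability separation.} Your proposed invariant, the ``multiset of cusp widths normalised by a common scale'', does not work in this setting: every jigsaw surface $S_J$ has exactly \emph{one} cusp, so the multiset collapses to a single number $L$, and $L$ is not a commensurability invariant (it multiplies under finite covers). The paper's route is quite different. Having established non-arithmeticity, one invokes Margulis's theorem that $\hbox{Comm}(\Gamma_J)$ is itself a lattice containing $\Gamma_J$ with finite index. One then shows $\hbox{Comm}(\Gamma_J)=\Gamma_J$ for jigsaws of signature $(r,1)$ by a geometric argument: lift the maximal horocycle $C$ of $S_J$ to $\HH$ and examine the pattern of self-tangency points along the horizontal component at height $\sqrt{2}$. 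Any element of $\hbox{Comm}(\Gamma_J)$ fixing $\infty$ must preserve this pattern, but for signature $(r,1)$ the pattern has exactly two tangencies per period, at distance $\sqrt{2}$ and $(3r+2)/\sqrt{2}$, hence no proper subperiod. This forces the commensurator to equal $\Gamma_J$, and varying $r$ then yields infinitely many classes.
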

Theorem \ref{thm:Main}  follows from either Theorem \ref{thm:S12} or \ref{thm:S13}.  Our idea is a general way to construct subgroups of $\PGLtwoQ$ by constructing their fundamental domains by gluing together tiles from a finite collection $\mathcal S$  of marked hyperbolic ideal triangle tiles satisfying certain balancing and rationality conditions, and with  matching conditions for the gluing. We call the polygons  obtained from this construction  $\mathcal S$-jigsaws, and the corresponding groups and surfaces ${\mathcal S}$-jigsaw groups and surfaces. In Theorem \ref{thm:S12}, we show  that for the collection $\mathcal S=\{\Delta(1,1,1),\Delta(1,1/2,2)\}$, (see \S \ref{s:jigsaw} for the definition of $\Delta(1,1,1)$ and $\Delta(1,1/2,2)$),  all hyperbolic $\mathcal S$-jigsaw groups and surfaces  are pseudomodular. Furthermore, there are infinitely many commensurability classes of such groups. On the other hand, in Theorem \ref{thm:S13}, we show that for $\SSS=\{\Delta(1,1,1),\Delta(1,1/3,3)\}$, arithmetic, pseudomodular, and non-maximally cusped groups all can occur.  There are also infinitely many (commensurability classes of) $\SSS$-jigsaw groups which are pseudomodular, and infinitely many (commensurability classes of) $\SSS$-jigsaw groups which are not pseudomodular.
We give exact criteria for determining when such a group is arithmetic,  pseudomodular or neither.
%\marginpar{To be ameneded if we can show result for arithmetic groups}
\medskip

 In some sense, the groups constructed in this way, in particular the groups arising  from the jigsaws with two tiles, $J= \Delta(1,1,1)\cup \Delta(1,1/2,2)$ or $J'=\Delta(1,1,1)\cup \Delta(1,1/3,3)$ are closest to being pseudomodular. We will explore  properties of these groups, as well as general integral jigsaw groups in a future paper \cite{LTV}. For example, we will describe an associated pseudo-euclidean algorithm and generalized continued fraction for some of these groups.  We will also show that for any integral hyperbolic jigsaw set $\SSS$, there are infinitely many commensurability classes of pseudomodular $\SSS$-jigsaw groups and  explore the question of arithmeticity for general integral jigsaw groups.

The rest of this paper is organized as follows. In \S \ref{s:jigsaw}, we give the definitions and some basic properties of ${\mathcal S}$-jigsaws, their associated surfaces and groups, and state the main results. In \S \ref{s:killer}, we describe Long and Reid's idea of covering a fundamental interval for the group by killer intervals to show that the cusp set is $\QQ \cup \{\infty\}$. In \S \ref{s:basic}, we describe integral hyperbolic jigsaw sets and groups and give some of their basic properties. In \S \ref{s:onetwojigsaws} and \S \ref{s:onethreejigsaws}, we specialize these results to $\SSS(1,2)$ and $\SSS(1,3)$ jigsaws and study the cusp sets of these jigsaw groups. In \S \ref{s:non-arith}, we explore criteria for non-arithmeticity and show that all $\SSS(1,2)$ jigsaw groups are non-arithmetic, hence pseudomodular. We also give exact criteria for a $\SSS(1,3)$ jigsaw group to be arithmetic. In \S \ref{s:commensurabilty}, we look at criteria for two non-arithmetic groups to be non-commensurable and use this to construct infinitely many commensurability classes of $\SSS(1,2)$ and $\SSS(1,3)$ pseudomodular groups, and infinitely many commensurability classes of non-pseudomodular $\SSS(1,3)$ jigsaw groups. Finally, in \S \ref{s:conclusion} we put together the results of the previous sections to prove Theorems \ref{thm:S12} and \ref{thm:S13}, and hence the main results of this paper and conclude with some open questions.

\medskip

 \noindent {\it Acknowledgements}. We are grateful to Chris Leininger, Alan Reid, Darren Long, Pradthana Jaipong, Mong-Lung Lang, Yasushi Yamashita and Junwei Tan for their interest in this paper and  for helpful conversations and comments. We are particularly indebted to Chris Leininger for supplying the argument showing the non-commensurability of certain groups using the maximal horocycles, which greatly improves our earlier argument.

\section{Hyperbolic jigsaws and  main results}\label{s:jigsaw}
We will always use the upper half space model for the hyperbolic plane $\HH$, with group of orientation preserving isometries $\PSLtwoR$. We also always denote translation to the right by $1$ by
\[T:=\left(
\begin{array}{cc}
1& 1\\
0& 1\\
\end{array}
\right).\] 
Let
 $\Delta$ be a positively oriented ideal triangle with vertices $v_1,v_2,v_3$ and sides $s_1=[v_1,v_2]$, $s_2=[v_2,v_3]$ and $s_3=[v_3,v_1]$ and let $x_i \in s_i$, $i=1,2,3$ be marked points on the sides. 
Then $\Delta$, together with the points $x_1,x_2,x_3$ is called a marked triangle, two such triangles are the same if they are isometric, as marked triangles. Let $p_i$ be the point on $s_i$ which is the projection of the opposite vertex of $\Delta$ to $s_i$. Let $d_i$ be the signed distance from $p_i$ to $x_i$ along $s_i$, and  let $k_i=e^{2d_i}$. The marked triangle, up to isometry, is then determined by the triple $(k_1,k_2,k_3) \in \RR_{+}^3$, and is denoted by $\Delta(k_1,k_2,k_3)$.  Compare figure \ref{fig:markedtriangle}. We use $\Delta_{[v_1,v_2,v_3]}$ to denote the (unmarked) ideal triangle  in $\HH$ with vertices $v_1,v_2,v_3$.
%\marginpar{Beicheng to supply figure of a marked triangle in standard position for this} 

%%%%%%%%%%%%%%%%%%%%%%%%%%%%%%%%%%%%%%%%%%%%%%%%%%%%%%%%%
\begin{figure}[hbt]\centering{\includegraphics[height=6cm]{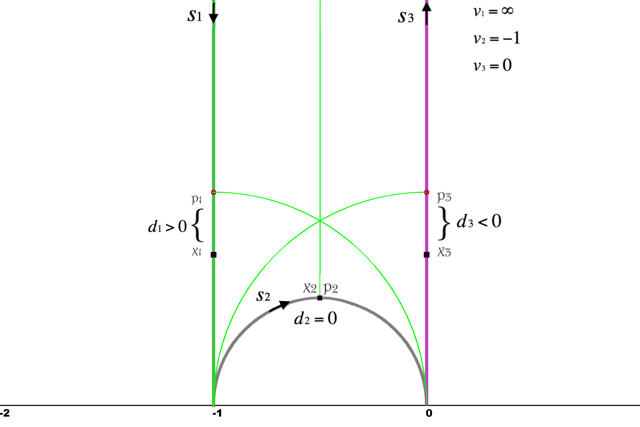}}
	\caption{A marked triangle where $v_1=\infty$, $v_2=-1$, $v_3=0$, $d_1>0$, $d_2=0$ and $d_3<0$ ($k_1>1,k_2=1,k_3<1$).}  
	\label{fig:markedtriangle} 
\end{figure}
%%%%%%%%%%%%%%%%%%%%%%%%%%%%%%%%%%%%%%%%%%%%%%%%%%%%%%%%%

Up to cyclic permutation, $(k_1,k_2,k_3)$ determines the same marked triangle, that is,
\[\Delta(k_1,k_2,k_3)=\Delta(k_2,k_3,k_1)=\Delta(k_3,k_1,k_2).\] 
 Note however that in general, $\Delta(k_1,k_2,k_3)$ is not isometric to $\Delta(k_2,k_1,k_3)$. %Figure ??? illustrates this for the triangles $\Delta(1,1/2,2)$ and $\Delta(1/2,1,2)$, in normalized positions. \marginpar{Beicheng to supply side by side pictures of this }
 
 We say that  $\Delta(k_1,k_2,k_3)$ is in standard position if $v_1=\infty$, $v_2=-1$, $v_3=0$, in which case we have 
\begin{equation}\label{eqn:fixedpoints}
x_1=-1+\frac{i}{\sqrt{k_1}}, \quad x_2=\frac{-1+\sqrt{k_2}i}{1+k_2}, \quad x_3=\sqrt{k_3}i.
\end{equation}

Fix an embedding of $\Delta(k_1,k_2,k_3)$ into $\HH$ and let $\imath_j$, $j=1,2,3$ be the $\pi$-rotation about $x_j$. Then $\imath_1, \imath_2, \imath_3$ generates a discrete subgroup, denoted by  $\Gamma(k_1,k_2,k_3) \le \PSLtwoR$. It is easy to see that the corresponding surface is complete ($\imath_1\imath_2\imath_3$ is parabolic) if and only if $k_1k_2k_3=1$.  Furthermore, if $k_j \in \QQ$ for $j=1,2,3$, and the triangle is embedded so the vertices $v_j \in \QQ \cup \{\infty\} \subset \partial \HH$, then $\Gamma(k_1,k_2,k_3) <\PGLtwoQ$. We have:

\begin{defn}(Rational and balanced triangles and Weierstrass groups)
The marked triangle $\Delta(k_1,k_2,k_3)$ is balanced if $k_1k_2k_3=1$ and rational if $k_1, k_2, k_3 \in \QQ$. $\Gamma(k_1,k_2,k_3)$, defined up to conjugation by $\PSLtwoR$, is the (Weierstrass) group generated by the involutions $\imath_1, \imath_2, \imath_3$ about the marked points $x_1,x_2$, and $x_3$ respectively. It is the index two supergroup of the torus group where $x_1,x_2$ and $x_3$ correspond to the Weierstrass points on the torus.

\end{defn} 
For example, if $\Delta(k_1,k_2,k_3)$ is in standard position, then $\imath_1$, $\imath_2$, $\imath_3$ can be represented by the matrices (as elements of $\PSLtwoR$)
\begin{equation}\label{eqn:involutions}
 \frac{1}{\sqrt{k_1}}\left(
\begin{array}{cc}
k_1 & 1+k_1 \\
-k_1 & -k_1 \\
\end{array}
\right), \quad
\frac{1}{\sqrt{k_2}} \left(
\begin{array}{cc}
1 & 1 \\
-(k_2+1) & -1 \\
\end{array}
\right), \quad
\frac{1}{\sqrt{k_3}} \left(
\begin{array}{cc}
0 & k_3 \\
-1 & 0 \\
\end{array}
\right)\end{equation}
respectively.
We remark that in the notation of Long and Reid in \cite{LR}, the group $\Gamma(k_1,k_2,k_3)$ corresponds to the index two supergroup of the torus group $\Delta(u^2,2\tau)$ where \begin{equation}
u^2=k_3, \qquad \tau=1/k_1+1+k_3.
\end{equation}
In particular, the group $\Gamma(1,1/n, n )$ in standard position corresponds to the group $\Delta(n, n+2)$ in their notation.

\begin{defn}(Matching sides)
	Let $\Delta=\Delta(k_1,k_2,k_3)$, $\Delta'=\Delta(k_1',k_2',k_3')$ be two marked triangles, with sides $s_1,s_2,s_3$ and $s_1', s_2',s_3'$, and marked points $x_1,x_2,x_3$ and $x_1',x_2',x_3'$ respectively. We say that $s_i$ and $s_j'$ match if $k_i=k_j'$. 
\end{defn}

Geometrically,  this means that if we glue $\Delta$ to $\Delta'$ by gluing $s_i$ to $s_j'$, matching  $x_i$ to $x_j'$, then the $\pi$ rotation $\imath_i$ about $x_i=x_j'$ interchanges the unmarked triangles $\Delta$ and $\Delta'$, see figure \ref{fig:gluingtriangles}.

%%%%%%%%%%%%%%%%%%%%%%%%%%%%%%%%%%%%%%%%%%%%%%%%%%%%%%%%%
\begin{figure}[hbt]\centering{\includegraphics[height=6cm]{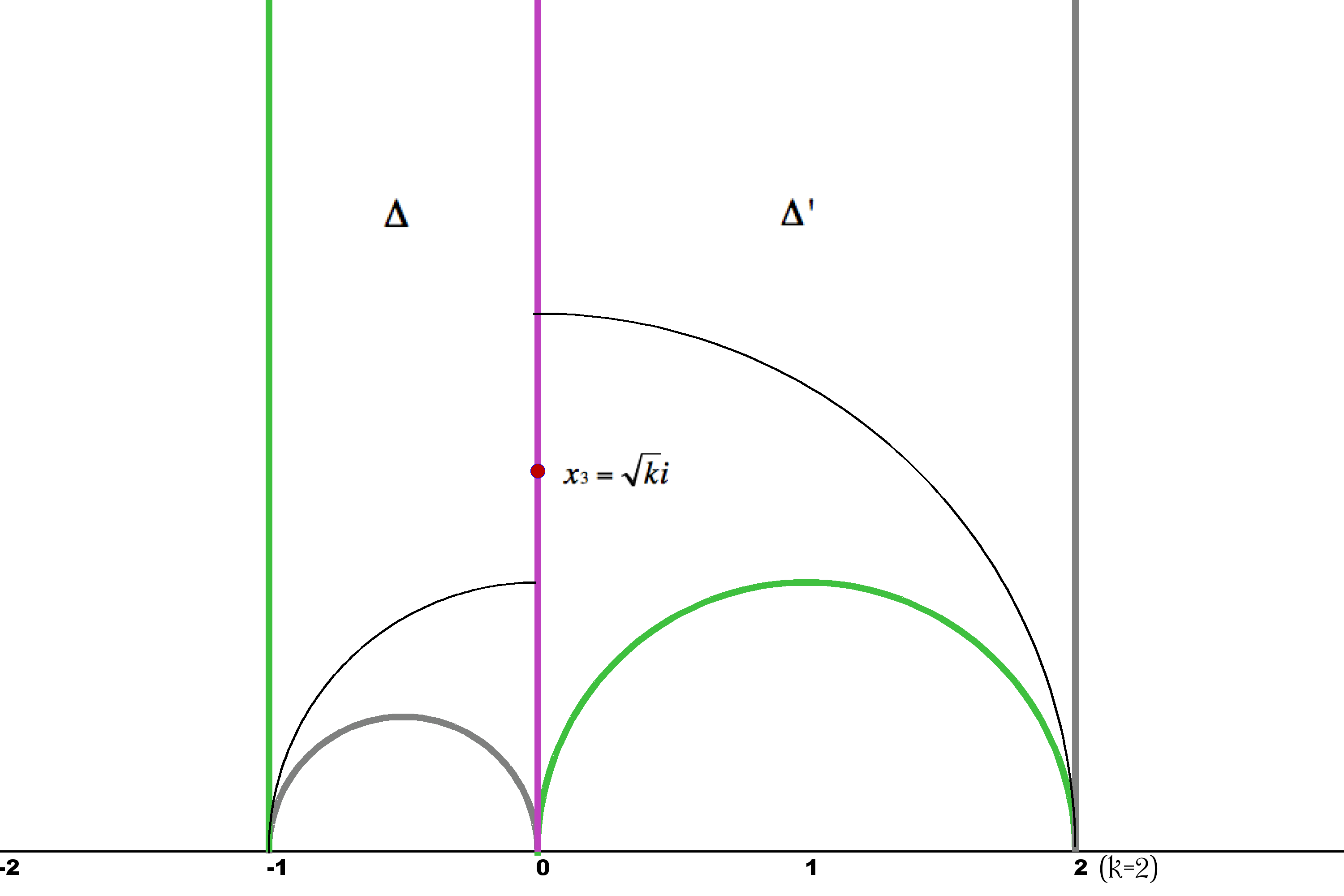}}
	\caption{Gluing two marked triangles $\Delta$ and $\Delta'$ along matching side with parameter $k=2$.}  
	\label{fig:gluingtriangles} 
\end{figure}
%%%%%%%%%%%%%%%%%%%%%%%%%%%%%%%%%%%%%%%%%%%%%%%%%%%%%%%%%

In terms of shear coordinates, this means that for the ideal quadrilateral formed by gluing $\Delta$ to $\Delta'$, the shear coordinate along the diagonal formed from the identified side is $\log k_i$. 

In this way,  we may glue marked triangles along matching sides, obtaining a triangulated ideal polygon unique up to isometry, with marked points on the interior and exterior sides.

\medskip

We are now ready to define the objects needed to assemble a hyperbolic jigsaw and to state our main theorem.

\medskip

\begin{defn} ~~
	
	\begin{enumerate}
		\item (Jigsaw tiles) A jigsaw tile is a {\it rational, balanced, marked ideal triangle} $\Delta(k_1,k_2,k_3)$. That is,  $k_1,k_2,k_3 \in \QQ$ and $k_1k_2k_3=1$.
		\item (Jigsaw sets) A jigsaw set is a finite (ordered) collection $\mathcal S:=\{\Delta_1, \ldots, \Delta_s\}$ of distinct jigsaw tiles such that every tile $\Delta_i \in \mathcal S$ has a side which matches with the side of some  other tile $\Delta_j\in \mathcal S$. 
		\item (Jigsaws) For a jigsaw set $\mathcal S$, an $\mathcal S$-jigsaw $J$ is an ideal polygon obtained by gluing  a finite number of tiles from $\mathcal S$ along matching sides, such that every $\Delta_j \in \SSS$ occurs at least once in $J$. As a convention, we will usually normalize one of the $\Delta_1$ tiles of $J$ to be in standard position with vertices at $\infty, -1$ and $0$, so that all other vertices of $J$ are rational. 
		\item (Size and signature of a jigsaw) The size of a jigsaw $J$, denoted by $|J|$ is the number of tiles in it. The signature $sgn(J)$ of $J$ is $(m_1,m_2, \ldots, m_s)$,  where $\mathcal S=\{\Delta_1, \ldots, \Delta_s\}$, and $m_j \in \NN$ denotes the number of $\Delta_j$ tiles in $J$. We have $|J|=\sum_{1}^s m_j$.
		\item (Jigsaw group and surface) The Fuchsian group associated to $J$, denoted by $\Gamma_J$, is the subgroup of $\PGLtwoQ$ generated by the $\pi$-rotations about the marked points on the (exterior) sides of $J$. The hyperbolic surface associated to $J$ is $S_J:=\HH/\Gamma_J$. (By our convention of fixing some $\Delta_1$ in standard position, $\Gamma_J$ is a zonal Fuchsian group, that is, contains a parabolic which fixes $\infty$).

%		\item (Graph of a jigsaw) The graph of $J$, denoted by  $G_J$, is a  bipartite planar graph (tree) with vertex set $V \sqcup W$ where each $v \in V$ corresponds to  a tile in $J$ and each $w \in W$ corresponds to a  side of the tiles; $v$ is joined to $w$ by an edge if $w$ lies on a side of the tile containing $v$. Every  $v \in V$ has valence three, while $w\in W$ has valence two if it corresponds to an identified side of two tiles and it has valence one otherwise. The vertices with valence one are called the extremal vertices of $G_J$.
%		\item (Coloring/Labeling of jigsaw graph) The coloring/labeling on the vertices of $G_J$ is the coloring/labeling of $V$ by the  elements of $\SSS$ and the coloring/labeling of $W$ by the   parameters $k$ of the sides. The jigsaw determines and is uniquely determined by the labeled graph.
		
		\item (Jigsaw tessellation) The tessellation $\QQQ_J$ of $\HH$ is the tessellation of $\HH$ obtained by the translates by $\Gamma_J$ of $J$ .
		\item (Jigsaw triangulation) The triangulation $\TTT_J$ of $\HH$  is the triangulation of $\HH$ obtained from the translates by $\Gamma_J$ of the triangulation of $J$ by its tiles. There is a labeling of the triangles of $\TTT_J$ by elements of the set $\{\Delta_1, \Delta_2, \ldots, \Delta_n \}$.
		\item   (Exterior and interior sides) The set of sides of $\QQQ_J$ is denoted by $\mathcal E_J$,   the set of sides of $\TTT_J$ is denoted by $\mathcal F_J$. Clearly, $\mathcal E_J \subset \mathcal F_J$. If $e \in \mathcal E_J$, it is called  an {\it exterior side}, if $e \in \mathcal F_J \setminus \mathcal E_J$, it is called an {\it interior side}. There is a labeling of the sides of $\TTT_J$ by the parameters $k_i$.
		\item (Cutting sequences of closed geodesics) The $V$-cutting sequence  of an oriented closed geodesic $\gamma$ on the surface $S_J$ is the  (periodic) sequence \\  $\overline{\Delta_{i_1},\Delta_{i_2}, \ldots, \Delta_{i_t}}$, or simply $\overline{i_1,i_2, \dots, i_t}$,  induced from the labeling of the triangles. The $W$-cutting sequence of $\gamma$ is $\overline{k_1,k_2,\ldots, k_t}$, where $k_i$ lies in the set of parameters of the sides of $\Delta_i \in \SSS$, induced from the labeling of the sides.
			%\item (Prime jigsaws) A $\mathcal S$-jigsaw is {\it prime} if it does not cover a smaller $\mathcal S$-jigsaw. 
	\end{enumerate}
	 %\marginpar{may remove last definitiion of prime jigsaws, also maybe the graphs}
\end{defn}

\bigskip

The following facts about hyperbolic jigsaws are clear: 
	\begin{enumerate} 
		\item An $\SSS$-jigsaw $J$ is a fundamental polygon for $\Gamma_J$. The balancing condition ensures that $S_J$ is a {\it complete} hyperbolic orbifold with one cusp and $N+2$ cone points of order $2$, where $N=|J|$. Hence $\QQQ_J$ tessellates all  of $\HH$, and  $\TTT_J$ triangulates all of $\HH$.

\item If $\SSS=\{\Delta(k_1,k_2,k_3)\}$ and $k_1,k_2,k_3$ are all distinct,  then a $\SSS$-jigsaw group is a finite index subgroup of the {\em Weierstrass group} $\Gamma(k_1,k_2,k_3)$ and hence all such jigsaw groups belong to the commensurability class of $\Gamma(k_1,k_2,k_3)$. The question of which such groups are pseudomodular for the parameters $k_1,k_2,k_3 \in \QQ$ with $k_1k_2k_3=1$ is essentially Question 1 of \S 6 of \cite{LR}.
\item If $\SSS=\{\Delta(k_1,k_2,k_3)\}$ and exactly two of $k_1,k_2,k_3$ are the same, then the jigsaw group may not be a finite index subgroup of $\Gamma(k_1,k_2,k_3)$ as there is some choice involved in how the triangles are glued.

\item
The $\pi$ rotation about the marked point of an exterior side $e \in \mathcal E_J$ is an element of $\Gamma_J$. The $\pi$-rotation about the marked point of an interior side $e \in \mathcal F_J \setminus \mathcal E_J$ is not in $\Gamma_J$.

\item The set of vertices of $\QQQ_J$ coincide with the set of vertices of $\TTT_J$. This set is precisely the cusp set of $\Gamma_J$ since $S_J$ has one cusp.

\end{enumerate}

\bigskip

We have the following results, either of  which immediately implies   Theorem \ref{thm:Main}:

\begin{thm}\label{thm:S12}
	Let $\mathcal S=\{\Delta(1,1,1), \Delta(1,1/2,2)\}$. Then for every $\mathcal S$-jigsaw $J$, $\Gamma_J$ is a pseudomodular group. Furthermore, these groups are in infinitely many commensurability classes.
\end{thm}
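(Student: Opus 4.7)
The plan is to verify each clause of the pseudomodular definition for an arbitrary $\mathcal{S}$-jigsaw group $\Gamma_J$ with $\mathcal{S}=\{\Delta(1,1,1),\Delta(1,1/2,2)\}$, and then to separate infinitely many such groups into distinct commensurability classes. That $\Gamma_J\le\PGLtwoQ$ with $\HH/\Gamma_J$ a one-cusped finite-area orbifold is immediate from the rationality and balancing hypotheses on the tiles together with the standing normalization (so $\Gamma_J$ is a zonal rational Fuchsian group containing $T$). The substantive work is thus the cusp-set computation, the non-arithmeticity, and the commensurability separation.

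For the cusp set, I would apply the Long--Reid killer-interval framework recalled in Section~\ref{s:killer}. Since $T\in\Gamma_J$ translates by $1$, it suffices to show every $p/q\in[-1,0]\cap\QQ$ with $\gcd(p,q)=1$ is a cusp; by descent on $q$, this reduces to producing, for each such rational, an element $\gamma\in\Gamma_J$ with $\gamma(p/q)=p'/q'$ and $0<q'<q$. These $\gamma$ are built from the parabolic fixing each boundary vertex $v$ of $J$, obtained by conjugating $T$ by an appropriate product of the exterior involutions $\imath_i$; each such parabolic supplies a killer interval $I_v\subset\RR$ on which the denominator strictly drops. The key step is to check that, uniformly over all $\mathcal{S}$-jigsaws, the intervals $\{I_v\}$ coming from the available parameters $k\in\{1,1/2,2\}$ cover a fundamental interval for $\langle T\rangle$ without gaps. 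I expect this to reduce to a finite local analysis at each boundary cusp of $J$, case by case in the matching parameter from $\{1,1/2,2\}$, the balancing condition together with these specific parameter values being precisely what forces each local killer interval to abut the next.

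Non-arithmeticity follows from the criterion of Section~\ref{s:non-arith}: it suffices to exhibit a hyperbolic element in the squared subgroup $\Gamma_J^{(2)}$ whose trace is rational but not an algebraic integer. Using the matrix formulas in \eqref{eqn:involutions} with $k_2=1/2$ and $k_3=2$ (so $\sqrt{k_2}$ and $\sqrt{k_3}$ are irrational but rationalize after squaring), a short hyperbolic word can be arranged whose trace lies in $\QQ\setminus\ZZ$. Since every arithmetic Fuchsian subgroup of $\PSLtwoR$ has algebraic-integer traces on its squared subgroup, this forces $\Gamma_J$ to be non-arithmetic, hence not commensurable with the arithmetic group $\PSLtwoZ$. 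Combined with the killer-interval cusp-set computation, this shows every such $\Gamma_J$ is pseudomodular.

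To get infinitely many commensurability classes I would invoke Leininger's maximal-horocycle argument developed in Section~\ref{s:commensurabilty}: for a non-arithmetic zonal subgroup of $\PGLtwoQ$, the width of the maximal horoball at $\infty$ together with its tangency pattern with its own $\Gamma_J$-translates is a commensurability invariant up to bounded rational ambiguity. A sequence of $\mathcal{S}$-jigsaws $J_n$ in which the number or placement of $\Delta(1,1/2,2)$ tiles is varied should then realize infinitely many distinct values of this invariant, separating the $\Gamma_{J_n}$ into infinitely many commensurability classes. The hardest step in the whole argument, I expect, is the killer-interval covering: because the combinatorial freedom in assembling an $\mathcal{S}$-jigsaw is large, the covering must be uniform across all jigsaws, so the proof must reduce to a finite list of local configurations at each boundary cusp rather than an ad hoc check on specific examples.
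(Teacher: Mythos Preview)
Your outline follows the same three-step architecture as the paper (cusp set via killer intervals, non-arithmeticity via a trace obstruction, commensurability via the maximal horocycle), but there is one concrete error and one underestimate of difficulty.

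The concrete error is the claim that $T\in\Gamma_J$. It is not: the parabolic subgroup at $\infty$ is generated by $T^L$ with $L=3m_1+4m_2$ where $(m_1,m_2)=\mathrm{sgn}(J)$, so the fundamental interval you must cover is $[0,L]$, not $[-1,0]$. Consequently your parabolics ``conjugates of $T$'' are not in $\Gamma_J$ either. This matters for the covering: the vertices of $J$ alone do not suffice, and in fact not every integer in $[0,L]$ is the foot of a vertical side of $\TTT_J$ (the $\Delta^{(2)}$ tiles produce width-$2$ gaps $[m,m+2]$). The paper's fix is a two-layer argument: first, every foot $q$ of a vertical side of $\TTT_J$ has contraction constant $1$ and hence killer interval $(q-1,q+1)$; second, the missing integer $m+1$ inside a width-$2$ gap is recovered as the apex of the adjacent triangle $\Delta_{[m,m+1,m+2]}\subset\TTT_J$, again with contraction constant $1$. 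Only then do the width-$2$ killer intervals about \emph{all} integers cover $[0,L]$. Your ``finite local analysis at each boundary cusp of $J$'' is not quite this; it needs to be a finite local analysis at each vertical strip of $\TTT_J$.

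The underestimate is in the non-arithmeticity step. Exhibiting one element of $\Gamma_J^{(2)}$ with $(\tr)^2\notin\ZZ$ is the right target, but the obvious candidates $h_jh_k$ (products of two vertical-side involutions) have $(\tr h_jh_k)^2\in\ZZ$ precisely when a certain parity condition on the feet $m_j,m_k$ holds, and it is a priori possible that a jigsaw satisfies all these parity constraints simultaneously. The paper rules this out by an inductive ear-removal argument: any putatively ``admissible'' $J$ would have all ears of type $\Delta^{(2)}$, and stripping them preserves admissibility, eventually contradicting the presence of a $\Delta^{(1)}$ tile. You should expect to need a combinatorial argument of this shape rather than a single explicit word.
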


	\begin{thm}\label{thm:S13}
		Let $\mathcal S=\{\Delta(1,1,1), \Delta(1,1/3,3)\}$. 
		Let $J_{A}$ be the jigsaw consisting of four tiles: one $\Delta(1,1,1)$ tile in the middle glued to three $\Delta(1,1/3,3)$ tiles along its three sides. Let $J$ be an $\SSS$-jigsaw with corresponding group $\Gamma_J$ and surface $S_J$. Then
		\begin{itemize}
			\item $\Gamma_J$ is arithmetic if and only if $J$ can be assembled from  a finite number of copies of  $J_{A}$ glued along matching sides. In particular, $\Gamma_{J_A}$ is arithmetic and all other arithmetic $\SSS(1,3)$ jigsaw groups are subgroups of $\Gamma_{J_A}$. 
			\item  $\Gamma_J$ is pseudomodular  if and only if the surface $S_J$ does not contain a closed geodesic $\gamma$ with $W$-cutting sequence $\overline{3,1,1/3,1}$ and it is not of the type above.
			\item $\Gamma_J$ is neither arithmetic nor pseudomodular if and only if the surface $S_J$  contains a closed geodesic $\gamma$ with $W$-cutting sequence $\overline{3,1,1/3,1}$. In this case, $\Gamma_J$ contains hyperbolic elements with integer fixed points (specials).
			\item There are infinitely many commensurability classes of  pseudomodular $\Gamma_J$, and infinitely many commensurability classes of non-pseudomodular $\Gamma_J$.
		\end{itemize}

		\end{thm}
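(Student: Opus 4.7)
My plan follows the trichotomy in the statement, with the commensurability count handled at the end.

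\textbf{Arithmeticity.} First I would verify directly that $\Gamma_{J_A}$ is arithmetic. The tile $J_A$ has a three-fold rotational symmetry (one central $\Delta(1,1,1)$ glued cyclically to three copies of $\Delta(1,1/3,3)$), so placing it in standard position and writing down the generating involutions via \eqref{eqn:involutions} should produce a small explicit generating set. I would then compute the invariant trace field and the invariant quaternion algebra, following Takeuchi, and recognize $\Gamma_{J_A}$ as commensurable with a known arithmetic Fuchsian group (very likely a Hecke triangle group with $\QQ$-trace field). When $J$ is built from copies of $J_A$ glued along matching sides, $\Gamma_J$ sits as a finite-index subgroup of $\Gamma_{J_A}$ by the general gluing mechanism of \S\ref{s:jigsaw}, and is therefore arithmetic. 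For the converse I would invoke the non-arithmeticity criterion developed in \S\ref{s:non-arith}: any $\SSS$-jigsaw $J$ which is not tiled by copies of $J_A$ must contain, somewhere in $\TTT_J$, an adjacency of tiles incompatible with the $J_A$ pattern; out of that adjacency I would exhibit a closed geodesic whose associated hyperbolic element $g \in \Gamma_J$ has $(\tr g)^2$ failing to be an algebraic integer, forcing non-arithmeticity.

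\textbf{Pseudomodular dichotomy.} Assume $\Gamma_J$ is not arithmetic. Suppose $S_J$ contains a closed geodesic $\gamma$ with $W$-cutting sequence $\overline{3,1,1/3,1}$. Then $\gamma$ lifts to the axis of a hyperbolic element $g = \imath_a \imath_b \imath_c \imath_d \in \Gamma_J$ where the four marked points sit on sides with parameters $3,1,1/3,1$ in order. Using the explicit matrix formulas \eqref{eqn:involutions}, a direct computation shows that $g \in \PGLtwoQ$ has rational, in fact integer, fixed points; since fixed points of hyperbolic elements cannot be cusps, these integers lie outside the cusp set, giving specials and obstructing pseudomodularity. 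For the converse, assume no such geodesic exists in $S_J$. I would apply the killer-interval machinery of \S\ref{s:killer} to a fundamental interval for the parabolic $T$ stabilizing $\infty$. The content of the argument is a finite case analysis of the possible sequences of tiles encountered by a geodesic ray landing at a rational $p/q$ which fails to be killed; each such ``escape'' sequence would have to track the cycle $\overline{3,1,1/3,1}$ eventually, and ruling this out by hypothesis forces every rational into the cusp set.

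\textbf{Infinitely many commensurability classes.} For the last bullet I would build two explicit infinite families $\{J_n^+\}$ and $\{J_n^-\}$ of $\SSS$-jigsaws, one free of the cycle $\overline{3,1,1/3,1}$ (hence pseudomodular by the above), the other containing it (hence non-pseudomodular and non-arithmetic). The simplest choice is to vary a chain of $\Delta(1,1/3,3)$ tiles appended to a fixed base jigsaw, in such a way that the local tile pattern around the cusp $\infty$ varies monotonically. I would then apply the maximal-horocycle invariant of Leininger from \S\ref{s:commensurabilty}: the embedded-horocycle height at the cusp is a commensurability invariant in the non-arithmetic setting, and monotonicity in $n$ forces infinitely many distinct values, hence infinitely many commensurability classes in each family.

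\textbf{Main obstacle.} The hardest step will be the converse direction of the pseudomodular criterion, namely that absence of the cycle $\overline{3,1,1/3,1}$ suffices for a finite killer cover to exist. Enumerating the possible long sequences of tiles that a geodesic ray can traverse, and verifying that every potential escape rational really does force the appearance of this cycle, is the delicate combinatorial step on which the whole dichotomy hinges. A close second is confirming the arithmeticity converse, i.e.\ ruling out any exotic arithmetic $\SSS$-jigsaw not of $J_A$ type, which requires a careful choice of obstructing hyperbolic element in every non-$J_A$ configuration.
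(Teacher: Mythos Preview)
Your outline tracks the paper's proof closely (killer intervals for the cusp set, trace obstructions for arithmeticity, Leininger's horocycle argument for commensurability), but two of your concrete steps would fail as written.

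In the specials direction you write the hyperbolic as $g=\imath_a\imath_b\imath_c\imath_d$, with the four involutions being the rotations about the sides in the $W$-cutting sequence. This is not right: the sides appearing in the $W$-cutting sequence are sides of $\TTT_J$, and the ones with labels $1/3$ and $3$ may well be \emph{interior} sides, whose $\pi$-rotations do not lie in $\Gamma_J$; moreover the closed geodesic on $S_J$ with periodic sequence $\overline{3,1,1/3,1}$ need not close after a single period. The paper sidesteps this entirely: the $W$-cutting sequence forces the lifted axis to run through a strip of $\Delta^{(3)}$ tiles isometric to a piece of the $\Gamma(1,1/3,3)$ tessellation, and the analysis of that Weierstrass group in \S\ref{s:onethreejigsaws} pins the endpoints of the axis at the integers $m-3$ and $m+1$. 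Because the geodesic is closed on $S_J$, \emph{some} hyperbolic in $\Gamma_J$ fixes these integers---one never needs to write it as a word. For the arithmeticity converse the paper is also more concrete than your sketch: it uses only products $h_jh_k$ of rotations about \emph{vertical} sides of $\QQQ_J$ (Lemma~\ref{lem:verticalsides}), which reduces arithmeticity to ``all sides type $3$ and all vertex $J$-widths $\equiv 0\pmod 3$'', and then a block decomposition (Proposition~\ref{p:arith13all}) shows only $J_A$-assemblies survive.

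Your commensurability invariant is also misstated. The maximal horocycle height does not vary: it is $\sqrt{3}$ for every $\SSS(1,3)$ jigsaw, since the first self-tangency always occurs at the type-$3$ marked points. What Proposition~\ref{p:commsurability123} actually uses is the \emph{tangency pattern} on the horizontal horocycle: for signature $(r,1)$ there are exactly two tangencies per fundamental interval, cutting it into arcs of length $\sqrt{3}$ and $(3r+2)/\sqrt{3}$, and no shorter translation can preserve this pattern, so $\Gamma_J=\mathrm{Comm}(\Gamma_J)$; distinct $r$ then give non-commensurable groups by area. Finally, for the pseudomodular family you should vary the number of $\Delta^{(1)}$ tiles, not a chain of $\Delta^{(3)}$ tiles---chaining $\Delta^{(3)}$ tiles is precisely how one manufactures the forbidden cycle $\overline{3,1,1/3,1}$, so your $J_n^+$ as described risks landing in the wrong family.
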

		
		Note that  a geodesic $\gamma$ in Theorem \ref{thm:S13}  with $W$-cutting sequence $\overline{3,1,1/3,1}$  necessarily only intersects $\Delta(1,1/3,3)$ tiles, so the $V$ cutting sequence is $\overline{\Delta(1,1/3,3)}$. In particular,  any $\SSS$-jigsaw $J$ such that every $\Delta(1,1/3,3)$ tile of $J$ is matched to a $\Delta(1,1,1)$ tile (along the side with parameter $1$)  is arithmetic or pseudomodular.
		
		\medskip

We will prove Theorems \ref{thm:S12} and \ref{thm:S13} by analysing the relevant cusp sets, checking  the arithmeticity criteria and using Margulis's criteria to show that there are  infinitely many commensurability classes of pseudomodular groups coming from the constructions.

\section{Killer intervals }\label{s:killer}
We give a brief description of Long and Reid's method to show when a group has cusp set all of $\QQ \cup \{\infty\}$.

Let $\Gamma < \PGLtwoQ$ be a zonal Fuchsian group (that is, $\infty$ is fixed by a parabolic subgroup of $\Gamma$) such that $\HH/\Gamma$ is a complete, finite area hyperbolic surface with exactly one cusp. We are interested in the cusp set of $\Gamma$, which in this case (since the surface has only one cusp) is $$K_{\Gamma}:=\{g(\infty) ~|~ g \in \Gamma\}.$$
 Since $\Gamma< \PGLtwoQ$, $K_{\Gamma} \subset \QQ\cup \{\infty\}$. The question is in what situation is $K_{\Gamma}$ exactly equal to  $\QQ\cup \{\infty\}$.
For this, Long and Reid introduced the concept of a killer interval, which generalizes the idea of ``flipping'' a rational number $p/q \in (-1,1)$ by $ \left(
\begin{array}{cc}
0 & 1 \\
-1 & 0 \\
\end{array} \right) \in \PSLtwoZ$, to obtain a new rational number with strictly smaller (absolute) denominator. We have:

\begin{prop}\label{p:interval}(cf. \cite{LR} examples 1 and 2)
	Suppose \[g = \left(
	\begin{array}{cc}
	\alpha & \beta \\
	\gamma & \delta \\
	\end{array}
	\right) \in \Gamma <\PGLtwoQ, \quad \hbox{where}\qquad \alpha, \beta, \gamma, \delta \in \ZZ, \quad  \gcd(\alpha, \beta, \gamma, \delta)=1.\]
	Suppose further that $\gcd(\alpha, \gamma)=k$ so $\alpha=k\alpha'$, $\gamma=k\gamma'$ where $\gcd (\alpha', \gamma')=1$. Then for any $p/q \in (\alpha'/\gamma' -1/\gamma,\alpha'/\gamma' +1/\gamma)$, $g^{-1}(p/q)$ has strictly smaller denominator than $p/q$.
\end{prop}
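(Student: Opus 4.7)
The plan is a short, direct calculation. Writing $g^{-1}$ (up to an overall scalar, which is irrelevant in $\PGLtwoQ$) as the matrix with entries $\delta,-\beta,-\gamma,\alpha$, the image of the rational $p/q$ is
\[
g^{-1}(p/q) \;=\; \frac{\delta p - \beta q}{\alpha q - \gamma p}.
\]
Both the numerator and denominator on the right are integers, so the denominator of $g^{-1}(p/q)$ in lowest terms divides $|\alpha q - \gamma p|$. It therefore suffices to prove the strict inequality $|\alpha q - \gamma p| < q$. The degenerate case $\alpha q - \gamma p = 0$ gives $g^{-1}(p/q) = \infty$, which we regard as having denominator $0$, and so the conclusion is immediate in that case.

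The key identity is the factorization
\[
|\alpha q - \gamma p| \;=\; |\gamma| \cdot q \cdot \left|\frac{\alpha}{\gamma} - \frac{p}{q}\right| \;=\; |\gamma| \cdot q \cdot \left|\frac{\alpha'}{\gamma'} - \frac{p}{q}\right|,
\]
obtained by pulling out $|\gamma| q$ and then substituting $\alpha/\gamma = \alpha'/\gamma'$. The hypothesis that $p/q$ lies in an open interval of radius $1/|\gamma|$ about $\alpha'/\gamma'$ gives $|\alpha'/\gamma' - p/q| < 1/|\gamma|$, and plugging this into the identity above immediately yields $|\alpha q - \gamma p| < q$, as required.

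I do not anticipate any obstacle; the argument is essentially a one-line estimate. The only bookkeeping is the freedom to replace $g$ by $-g$ in $\PGLtwoQ$, which lets us arrange $\gamma > 0$ so that the expression ``$1/\gamma$'' in the statement of the interval is unambiguous. Geometrically, the picture underlying the terminology \emph{killer interval} is that $g^{-1}$ carries the horocycle of Euclidean diameter $1/q^2$ sitting at $p/q$ to a horocycle at $g^{-1}(p/q)$ of strictly larger Euclidean diameter (when $p/q$ is near the parabolic fixed point $\alpha'/\gamma'$ of $g$), and a smaller denominator is exactly what a larger diameter records; the algebraic calculation above makes this precise.
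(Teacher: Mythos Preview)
Your argument is correct and is essentially the same direct computation as the paper's own proof: both write $g^{-1}(p/q)=(\delta p-\beta q)/(\alpha q-\gamma p)$ and then observe that the hypothesis on $p/q$ is exactly the inequality $|\alpha q-\gamma p|<|q|$. Your version is in fact slightly more careful (noting that the reduced denominator merely divides $|\alpha q-\gamma p|$, handling the case $\alpha q-\gamma p=0$, and normalizing $\gamma>0$), but the substance is identical.
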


\begin{proof}
	We have,
\[g = \left(
\begin{array}{cc}
\alpha & \beta \\
\gamma & \delta \\
\end{array}
\right), \qquad \alpha, \beta, \gamma, \delta \in \ZZ, \qquad  \hbox{gcd}(\alpha, \beta, \gamma, \delta)=1.\]
Then $\alpha/\gamma=\alpha'/\gamma'=g(\infty)$, where  $\alpha=k\alpha'$, $\gamma=k\gamma'$ and $k=\hbox{gcd}(\alpha, \gamma)$. For any rational $p/q \in \QQ$ where $\gcd(p,q)=1$,  \[g^{-1}(p/q)=\frac{\delta p -\beta q}{-\gamma p+\alpha q}= \frac{\delta p -\beta q}{k(-\gamma' p+\alpha' q)}. \]
Hence, the image has smaller denominator if \[|-\gamma p+\alpha q|<|q|, \quad \hbox{ that is,} \quad 
p/q \in (\alpha'/\gamma' -1/k\gamma',\alpha'/\gamma' +1/k\gamma'). \] 

\end{proof}

\begin{defn} (Killer intervals and contraction constant)
	The open interval $I= \left(\alpha'/\gamma'-1/k\gamma', \alpha'/\gamma'+1/k\gamma'\right)$ is called the {\em killer interval} about the cusp $\alpha'/\gamma'$ and $k\in \NN$ the {\em contraction constant} of the cusp $\alpha'/\gamma'$. 
	
\end{defn}

We also have a useful variation of Proposition \ref{p:interval} where  translation by an integer value does not affect the contraction constant:

\begin{prop}\label{p:intervalshift} 	
	Suppose \[g = \left(
	\begin{array}{cc}
	\alpha & \beta \\
	\gamma & \delta \\
	\end{array}
	\right) \in \PGLtwoQ, \quad \hbox{where}\qquad \alpha, \beta, \gamma, \delta \in \ZZ, \quad  \gcd(\alpha, \beta, \gamma, \delta)=1.\]
	Suppose further that $\gcd(\alpha, \gamma)=k$ and $\alpha=k\alpha'$, $\gamma=k\gamma'$.
 Let  $h=T^{n}gT^{-n}$ where $ n \in \ZZ$.
	
	Then for any $p/q \in (\alpha'/\gamma'+n -1/k\gamma',\alpha'/\gamma' +n+1/k\gamma')$, $h^{-1}(p/q)$ has strictly smaller denominator than $p/q$.
\end{prop}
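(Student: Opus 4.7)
The plan is to reduce this directly to Proposition \ref{p:interval} by computing the matrix entries of $h = T^n g T^{-n}$ and observing that conjugation by an integer power of $T$ shifts the fixed point at infinity by $n$ while leaving all the relevant arithmetic data (the lower-left entry, the contraction constant, and the coprimality of the four entries) unchanged.

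Concretely, I would first compute
\[
h = T^n g T^{-n} = \begin{pmatrix} \alpha + n\gamma & \beta + n\delta - n\alpha - n^2\gamma \\ \gamma & \delta - n\gamma \end{pmatrix}.
\]
The lower-left entry is still $\gamma = k\gamma'$, while the upper-left becomes $\alpha + n\gamma = k(\alpha' + n\gamma')$. Since $\gcd(\alpha' + n\gamma', \gamma') = \gcd(\alpha', \gamma') = 1$, the gcd of the first column is still $k$, and the reduced fraction $h(\infty) = (\alpha' + n\gamma')/\gamma' = \alpha'/\gamma' + n$ is the new fixed point at infinity.

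Next I would verify that the four entries of $h$ still have overall gcd $1$. This is a general fact: conjugation by a matrix in $\SL(2,\ZZ)$ preserves the gcd of integer matrix entries, since if $d$ divides every entry of $g$ then $d$ divides every entry of $T^n g T^{-n}$, and the symmetric argument with $h$ and $T^{-n}$ gives equality.

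With these observations in hand, Proposition \ref{p:interval} applied to $h$ immediately yields the conclusion: the killer interval for $h$ is centered at $\alpha'/\gamma' + n$ with contraction constant $k$, so for any $p/q$ in the stated open interval $(\alpha'/\gamma' + n - 1/(k\gamma'), \alpha'/\gamma' + n + 1/(k\gamma'))$, $h^{-1}(p/q)$ has strictly smaller denominator than $p/q$. There is no real obstacle here; the only thing to be careful about is correctly tracking that the lower-left entry and the column-gcd $k$ are genuinely invariant under the conjugation, which is what makes the killer interval translate rigidly by $n$.
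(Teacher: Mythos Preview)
Your argument is correct and is exactly the kind of ``simple computation'' the paper has in mind; the paper itself omits all details and simply states that the proof is a simple computation. Your explicit matrix computation for $h=T^{n}gT^{-n}$, together with the observations that the lower-left entry and the column-gcd $k$ are unchanged and that conjugation by $T^{\pm n}\in\SL(2,\ZZ)$ preserves the overall gcd of the entries, is precisely what is needed to reduce to Proposition~\ref{p:interval}.
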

\begin{proof} The proof is a simple computation and will be omitted.
	\end{proof}
\bigskip

Now suppose that $L$ is the smallest positive integer such that  $T^L \in \Gamma$. Then every $x \in \RR$ can be moved into a fundamental interval of length $L$, say $[0,L]$, by a suitable power of $T^L$ without increasing its denominator. If we can cover the fundamental interval $[0,L]$ by a finite set of killer intervals, then every rational number can be taken  to $\infty$ by a finite composition of elements of $\Gamma$, so that $K_{\Gamma}=\QQ \cup \{\infty\}$. We can summarize the above discussion as:

\begin{prop}(Long and Reid \cite{LR} Theorem 2.5)\label{prop:killer}
	 Let  $\Gamma < \PGLtwoQ$ be such that $\HH/\Gamma$ is a complete hyperbolic surface with one cusp  and suppose that $L$ is the smallest positive integer such that  $\left(
	 \begin{array}{cc}
	 1 & L \\
	 0 & 1 \\
	 \end{array}
	 \right) \in \Gamma$. If the fundamental interval $[0,L]$  can be covered by a finite set of killer intervals, then the cusp set $K_{\Gamma}=\QQ \cup \{\infty\}$.
\end{prop}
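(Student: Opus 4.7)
The plan is to establish the forward inclusion $\QQ \cup \{\infty\} \subseteq K_\Gamma$ (the reverse is immediate from $\Gamma < \PGLtwoQ$), by strong induction on the denominator $q \ge 1$ of $p/q \in \QQ$ written in lowest terms. The first step is to exploit the parabolic $T^L \in \Gamma$ (which translates by $L$ and fixes $\infty$) to push $p/q$ into the fundamental interval $[0,L]$ without altering its denominator or its $\Gamma$-orbit, so we may assume $p/q \in [0,L]$ from the outset.

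By hypothesis $[0,L]$ is covered by finitely many killer intervals $I_1, \ldots, I_N$, each $I_j$ being the killer interval of some $g_j \in \Gamma$ as in Proposition \ref{p:interval}. Pick an index $j$ with $p/q \in I_j$; the proposition then says that $g_j^{-1}(p/q)$ has denominator strictly less than $q$ in lowest terms. Two cases follow. If the new denominator is $0$ then $g_j^{-1}(p/q) = \infty$, so $p/q = g_j(\infty) \in K_\Gamma$ directly. Otherwise the new denominator is a positive integer strictly less than $q$, and the inductive hypothesis produces $h \in \Gamma$ with $g_j^{-1}(p/q) = h(\infty)$, giving $p/q = g_j h(\infty) \in K_\Gamma$.

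The one point that deserves a moment's care is the base case $q = 1$, where the denominator must drop from $1$ to $0$, i.e.\ we need $g_j^{-1}(m) = \infty$ for the integer $m \in [0,L]$ we have landed on. This is forced by a cheap arithmetic observation: the killer interval $I_j$ around $\alpha_j'/\gamma_j'$ has half-width $1/(k_j|\gamma_j'|) \le 1/|\gamma_j'|$, so an integer $m \in I_j$ satisfies $|m\gamma_j' - \alpha_j'| < 1/k_j \le 1$, forcing $m\gamma_j' = \alpha_j'$; then $-\gamma_j' m + \alpha_j' = 0$, and the formula for $g_j^{-1}$ in the proof of Proposition \ref{p:interval} gives $g_j^{-1}(m) = \infty$.

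I do not anticipate any genuine obstacle: the whole argument is an orchestration of the denominator-contraction mechanism of Proposition \ref{p:interval} against the parabolic translation symmetry $T^L$, and the induction terminates since the denominators, viewed in lowest terms, form a strictly decreasing sequence in $\NN_{\ge 0}$. The only item that requires vigilance is ensuring that reduction to lowest terms after applying $g_j^{-1}$ does not spoil the strict decrease; this is automatic because reduction can only shrink the absolute value of the denominator further.
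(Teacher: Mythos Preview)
Your proof is correct and follows exactly the approach sketched in the paper: translate into the fundamental interval by a power of $T^L$ (which preserves denominators), then repeatedly apply the killer-interval contraction of Proposition~\ref{p:interval} to strictly decrease the denominator until reaching $\infty$. The paper gives only the one-paragraph outline preceding the statement, and your write-up simply fills in the induction and the base case $q=1$ with appropriate care.
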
 

Our basic strategy will be to construct Jigsaw groups such that Proposition \ref{prop:killer} applies.

\section{Basic properties of hyperbolic jigsaw groups}\label{s:basic}

In this section we explore some of the basic properties of  hyperbolic jigsaws and their associated groups and surfaces. We will mostly be interested in jigsaws composed from tiles which have some kind of integrality condition. We first establish some notation and convention. 

\subsection{Integral hyperbolic jigsaws}\label{ss:integraljigsaws} 

For $n \in \NN$, let $$\Delta^{(n)}:=\Delta(1, 1/n, n)=\Delta(1/n,n,1)=\Delta(n,1,1/n).$$ 

\begin{defn}
	An integral hyperbolic jigsaw set is a set of the form
	 \[\SSS(n_1,n_2, \ldots, n_s):=\{\Delta^{(n_1)},\Delta^{(n_2)}, \dots, \Delta^{(n_s)} \}\] where $1=n_1 <n_2<\cdots <n_s$. An integral hyperbolic jigsaw is an $\SSS$-jigsaw where $\SSS$ is integral. 
	
\end{defn}
Note that we have set $\Delta^{(n_1)}=\Delta^{(1)}$. This is convenient for our purposes although not completely necessary, we could have defined integral jigsaw sets which do not contain $\Delta^{(1)}$.

 Henceforth  $\SSS(n_1,n_2, \ldots, n_s)$ will denote an integral jigsaw set with $1 =n_1<n_2 \cdots < n_s$.

\begin{defn} For an integral jigsaw $J$, we will call sides of $J$,  $\QQQ_J$ or $\TTT_J$  with label/parameter in the set $\{n_j,1/n_j\}$ {\em type $n_j$} sides. So for example, $\SSS(1,2)$ jigsaws  can have only type 1 or type 2 sides, and $\SSS(1,3)$ jigsaws can have only type 1 or type 3 sides.

\end{defn}
\medskip
 Let  $J$ be a $\SSS(n_1,n_2, \ldots, n_s)$-jigsaw, with  $|J|=N$ and $sgn(J)=(m_1, \ldots, m_s)$. We will always normalize by putting one of the $\Delta^{(1)}$ tiles of $J$ into standard position $\Delta_{[\infty,-1,0]}$. Let $v_0, v_1, \ldots, v_{N+1}$ be the (cyclically ordered) vertices of $J$ where $v_0=\infty$, $v_1<v_2<\cdots <v_{N+1}$. Because of our normalization, $v_j=-1$, $v_k=0$ for some $1 \le j <k \le N+1$. Let $s_i=[v_i,v_{i+1}]$ ($v_{N+2}=v_0$) be the $i$th (exterior) side of $J$ (in cyclic order) with marked point $x_i \in s_i$. Let $\imath_i$ be the $\pi$-rotation about $x_i$, $i=0, \dots, N+1$. Then $\Gamma_J=\langle \imath_0, \imath_1,\ldots, \imath_{N+1}\rangle $, and $J$ is a fundamental domain for $\Gamma_J$. 	Recall that $\QQQ_J$ and $\TTT_J$ are the tessellation and triangulation of $\HH$ associated to $J$. We have: 
\medskip

\begin{prop}\label{p:basic} Let $J$ be an integral   $\SSS(n_1,n_2, \ldots, n_s)$-jigsaw with  $sgn(J)=(m_1, \ldots, m_s)$ and  $|J|=N$, and such that one of the $\Delta^{(1)}$ tile of $J$ is in standard position.  Let $\Delta_{[v_1, v_2,v_3]}\subset \TTT_J$ with sides $e_1=[v_1,v_2], e_2=[v_2,v_3]$ and $e_3=[v_3,v_1]$ labeled by $k_1,k_2$ and $k_3$  respectively. Suppose that $v_1=\infty$. Then:
	
	\begin{enumerate}

		\item if $\Delta_{[\infty, v_2,v_3]}$ is a $\Delta^{(1)}$ triangle, then $v_2=m$, $v_3=m+1$ for some integer $m$.
		
		\item if $\Delta_{[\infty, v_2,v_3]}$ is a $\Delta^{(n)}$ triangle where $n>1$ and $k_1=1$ (so $k_2=1/n$, $k_3=n$), then $v_2=m$, $v_3=m+1$ for some integer $m$. The triangle to the  right of $\Delta$ in $\TTT_J$  is a $\Delta^{(n)}$ triangle.
		
		\item if $\Delta_{[\infty, v_2,v_3]}$ is a $\Delta^{(n)}$ triangle where $n>1$ and $k_1=n$, (so $k_2=1$, $k_3=1/n$) then $v_2=m$, $v_3=m+n$ for some integer $m$. The triangles to the left and right of $\Delta$ in $\TTT_J$  are also $\Delta^{(n)}$ triangles.

		\item if $\Delta_{[\infty, v_2,v_3]}$ is a $\Delta^{(n)}$ triangle where $n>1$ and $k_1=1/n$ (so $k_2=n$, $k_3=1$), then $v_2=m$, $v_3=m+1$ for some integer $m$. The triangle to the  left of $\Delta$ in $\TTT_J$  is a $\Delta^{(n)}$ triangle.
		
		\item if $e=[\infty, m]$ is a side of $\TTT_J$ of type $n$, where $n \in \{n_1, \ldots, n_s\}$, then the marked point on $e$ is $m+\sqrt{n}i$, that is, it has height $\sqrt{n}$ above the real line.

			\end{enumerate}

		\end{prop}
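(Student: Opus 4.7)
The plan is to prove statement (5) first by induction along the fan of triangles in $\TTT_J$ incident to the vertex $\infty$, and then to read off statements (1)--(4) as corollaries of (5) together with the matching condition.

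The starting ingredient is a height formula. For any triangle $\Delta_{[\infty,a,b]}$ with $a<b$ and parameters $(k_1,k_2,k_3)$, the isometry $z\mapsto (z-b)/(b-a)$ sends it to standard position; pulling back the expressions in (\ref{eqn:fixedpoints}) for $x_1$ and $x_3$ yields that the marked point on the left side $[\infty,a]$ is $a+i(b-a)/\sqrt{k_1}$ and the one on the right side $[\infty,b]$ is $b+i(b-a)\sqrt{k_3}$; in particular their imaginary parts are $(b-a)/\sqrt{k_1}$ and $(b-a)\sqrt{k_3}$ respectively.

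Next, I induct around the fan of triangles of $\TTT_J$ incident to $\infty$, which is periodic of period $L$ because $\Gamma_J$ is zonal with minimal parabolic $T^L$. The base case is the normalized tile $\Delta_{[\infty,-1,0]}=\Delta^{(1)}$, whose two vertical marked points sit at height $1=\sqrt{1}$. For the inductive step, suppose $\Delta=\Delta_{[\infty,a,b]}$ is a fan triangle with $a,b\in\ZZ$ whose right-side marked point has height $\sqrt{n}$, where $n$ is the type of $[\infty,b]$. The neighbor $\Delta'=\Delta_{[\infty,b,b']}$ to the right shares this marked point, and by matching its left-side parameter $k_1'$ equals the right-side parameter $k_3$ of $\Delta$; equating heights gives $b'-b=\sqrt{n}\cdot\sqrt{k_3}$. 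A short case analysis on which cyclic form $(k_1',k_2',k_3')$ the neighboring tile takes (determined by $k_1'\in\{1,n,1/n\}$) shows that $b'-b\in\{1,n\}\subset\ZZ$ and that the right-side marked point of $\Delta'$ is at height $\sqrt{n'}$, where $n'$ is the type of its right side. This propagates (5) all around the fan.

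From (5) I then read off (1)--(4). Given a fan triangle $\Delta=\Delta_{[\infty,v_2,v_3]}$ of known type and orientation (i.e.\ given $k_1$), the height formula asserts the left-side marked point sits at $(v_3-v_2)/\sqrt{k_1}$, and (5) says this equals $\sqrt{n_L}$ with $n_L$ the type of $[\infty,v_2]$; solving gives $v_3-v_2=1$ in cases (1), (2), (4) and $v_3-v_2=n$ in case (3). The adjacent-tile assertions follow from the observation that among the integral tiles $\Delta^{(m)}=\Delta(1,1/m,m)$, a parameter of value $n$ or $1/n$ with $n>1$ forces $m=n$ (as $1/n\notin\NN$), so matching across a type-$n$ side compels the neighbor to be another $\Delta^{(n)}$. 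The main obstacle is simply the bookkeeping of the three cyclic orientations $(1,1/n,n)$, $(1/n,n,1)$, $(n,1,1/n)$ of each $\Delta^{(n)}$ tile as it sits in the fan, since each yields a different formula for $v_3-v_2$ and for the outgoing marked-point height; but once the height formula of the first step is in hand, every case reduces to a one-line computation.
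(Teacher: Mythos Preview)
Your proof is correct and follows essentially the same approach as the paper: both argue by induction along the fan of triangles of $\TTT_J$ incident to $\infty$, starting from the normalized $\Delta^{(1)}$ tile and propagating left and right using the matching condition. The paper's proof is a one-sentence sketch of this induction, whereas you spell out the height formula and organize the argument by establishing (5) first and then reading off (1)--(4); this is a cosmetic difference in presentation, not a genuinely different route.
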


\begin{proof}  By our normalization, the triangle $\Delta_{[\infty, -1,0]}$  is a $\Delta^{(1)}$ triangle of $\TTT_J$, so satisfies condition (1). The proposition follows easily by induction moving to the right and left of this normalized triangle, using the matching conditions.

\end{proof}
	It follows from the above that if $\Delta_{[\infty, v_2,v_3]}\subset \TTT_J$, then $v_3-v_2$ takes values in $\{1, n_2, \ldots, n_s\}$ and that the $\Delta^{(n_i)}$ triangles where $n_i>1$ with $\infty$ as a vertex occur in successive triples, where the middle triangle has width $n_i$ and the other two  width $1$.
 We can use this to define a $J$-width at each vertex of a triangle, and hence a $J$-width for each vertex of the jigsaw $J$:

\begin{defn} (Vertex $J$-widths)
	For any triangle $\Delta^{(n)}$, $n \in \NN$, the $J$-width of the vertices between the type 1 side and the type $n$ sides  is $1$. The $J$-width of the vertex between the two type $n$ sides is $n$. For the jigsaw $J$, the $J$-width of a vertex $v$ of $J$, denoted by $\JW(v)$ is the sum of the $J$-widths of the vertices of the triangles at $v$. 
\end{defn}

\noindent Remark. The $J$-width of a vertex is in general different from  the weight of the vertex $v$, which is just the number of triangles at  $v$. We denote the weight by $\wt(v)$.

\medskip

From this we easily get:

\begin{prop}
	Let $J$ be a $\SSS(n_1, \ldots, n_s)$-jigsaw as in Proposition \ref{p:basic}, with size $N$ and signature $(m_1, m_2, \ldots, m_2)$,  and let $\Gamma_J=\langle \imath_0, \imath_1,\ldots, \imath_{N+1}\rangle $. Then
	\[\imath_{N+1}\imath_N\cdots\imath_{0}=\left(
			\begin{array}{cc}
			1 & L \\
			0 & 1 \\
			\end{array}
			\right) \] where \[L=\sum_{i=1}^{s}m_i(2+n_i)\in \ZZ.\] 
	
\end{prop}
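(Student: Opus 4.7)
My plan is to show that $M := \imath_{N+1}\imath_N\cdots\imath_0$ is a translation fixing $\infty$, and then to compute its translation length by identifying it with the horocyclic length of the cusp of $S_J$.

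First I would verify that $M(\infty) = \infty$. Each $\imath_i$ is a half-turn about the marked point on the geodesic $s_i = [v_i, v_{i+1}]$, which swaps the two endpoints $v_i$ and $v_{i+1}$. Applying $\imath_0, \imath_1, \ldots, \imath_{N+1}$ in turn to $v_0 = \infty$ gives $v_0 \mapsto v_1 \mapsto v_2 \mapsto \cdots \mapsto v_{N+2} = v_0$, so $M$ fixes $\infty$. Next I would argue that $M$ generates the parabolic stabilizer of $\infty$ in $\Gamma_J$: since $J$ is a fundamental polygon for $\Gamma_J$ with a single cusp at $\infty$ (all $N+2$ ideal vertices of $J$ lie in the $\Gamma_J$-orbit of $\infty$), the Poincar\'e polygon theorem identifies the cyclic product of the side-pairing involutions with the cusp holonomy. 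Hence $M = T^L$ for a positive integer $L$ equal to the translation length.

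To compute $L$ geometrically, I would use that $L$ is the Euclidean width at $\infty$ of a fundamental strip for $\langle T^L \rangle$, which is tiled by the triangles of $\TTT_J$ incident to $\infty$. Since every vertex of $J$ is $\Gamma_J$-equivalent to $\infty$, and the stabilizer in $\Gamma_J$ of any tile is trivial, the $3N$ pairs $(\Delta, v)$, with $\Delta$ a tile of $J$ and $v$ a vertex of $\Delta$, are in bijection with the triangles of $\TTT_J$ at $\infty$ in this strip: $(\Delta, v)$ corresponds to $g\Delta$ for the unique $g \in \Gamma_J / \langle T^L\rangle$ with $g(v) = \infty$. Under this bijection, the Euclidean width at $\infty$ of $g\Delta$ equals the $J$-width of $v$ inside $\Delta$ as described in Proposition~\ref{p:basic}, namely $1$ when $v$ is adjacent to a type-$1$ side of $\Delta$ and $n$ when both sides of $\Delta$ at $v$ are of type $n$. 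Summing the three vertex contributions per tile $\Delta^{(n)}$ yields $1 + 1 + n = n + 2$ (and $3$ in the degenerate case $n = 1$), and grouping the $N$ tiles by type gives
\[L = \sum_{i=1}^s m_i(n_i + 2).\]

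The main subtlety is the identification of $M$ with the cusp generator rather than a power of it. To secure this, I would verify the base case $N = 1$ by the direct matrix computation using \eqref{eqn:involutions}: for a single $\Delta^{(n)}$ tile in standard position one obtains $\imath_2 \imath_1 \imath_0 = T^{n+2}$, pinning down both the sign and magnitude. This base case also underwrites an alternative inductive proof in which attaching a $\Delta^{(n)}$ tile along an exterior side $s_j$ replaces the involution $\imath_j$ by two new involutions $\imath_a, \imath_b$, and the triangle relation for the attached tile (a conjugate of the $N=1$ identity, combined with $\imath_j^2 = 1$) makes the translation length at $\infty$ grow by exactly $n+2$, reproducing the formula inductively.
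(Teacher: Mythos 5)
Your proposal is correct and follows essentially the same route as the paper, whose one-line proof is exactly your central step: the product is the cusp holonomy at $\infty$, and its translation length is the total $J$-width about the cusp, with each $\Delta^{(n)}$ tile contributing $1+1+n=n+2$. The extra material you supply (chasing $\infty$ through the involutions, the Poincar\'e/coset bookkeeping identifying triangles at $\infty$ with tile--vertex pairs, and the $N=1$ matrix check $\imath_2\imath_1\imath_0=T^{n+2}$ pinning the sign) just makes explicit what the paper leaves implicit.
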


\begin{proof}
	We only need to compute the total $J$-width about the cusp of $\Gamma_J$. Each $\Delta^{(n)}$ triangle contributes a $J$-width of $2+n$, so the result follows.
	
	\end{proof}

	To summarize, we have shown:
 \begin{itemize}
	\item all (normalized) integral hyperbolic jigsaw groups have a fundamental interval of integral length $L=\sum_{i=1}^{s}m_i(2+n_i)$;
	\item  if $\Delta_{[\infty, v_2,v_3]} \subset \TTT_J$, then $v_2 \in \ZZ$ and $v_3-v_2$ takes values in $\{1,n_2,\ldots, n_s\}$;
	%\item if $\Delta_{[\infty, v_2,v_3]} \subset \TTT_J$, then $v_3-v_2$ takes values in $\{1,n_2,\ldots, n_s\}$;
	\item if $\Delta_{[\infty,v_2,v_3]}\subset \TTT_J$ has width $n=v_3-v_2>1$, then $\Delta_{[\infty,v_2,v_3]}$ is the lift of a $\Delta^{(n)}$ tile, and the triangles to the left and right are also lifts of $\Delta^{(n)}$ tiles with width  one. 
\end{itemize}    

More importantly, the cusps at the end of vertical sides of $\TTT_J$ have large killer intervals:

\begin{prop}\label{p:integercusps}
	 Let $J$ be an integral hyperbolic jigsaw satisfying the conditions of Proposition \ref{p:basic}. If $\Delta_{[\infty, q,r]}\subset \TTT_J$, then $q \in \ZZ$ is a cusp of $\Gamma_J$ with contraction constant one, so the killer interval about $q$ is $(q-1,q+1)$.   
\end{prop}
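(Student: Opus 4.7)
The plan is to verify in order that $q \in \ZZ$, that $q$ is a cusp of $\Gamma_J$, and that the contraction constant at $q$ equals one. The first assertion is immediate from Proposition \ref{p:basic}, and the second follows because $q$ is a vertex of $\TTT_J$ (hence of $\QQQ_J$), and the complete surface $S_J$ has a single cusp, so every $\QQQ_J$-vertex lies in the orbit $\Gamma_J \cdot \infty$. It remains to establish the contraction constant: I will exhibit $g \in \Gamma_J$ with $g(\infty) = q$ whose coprime integer representative in $\PGLtwoQ$ has first column $(q,1)^T$; then $\gcd(q,1) = 1$ yields contraction constant one and killer interval $(q-1, q+1)$ by definition.

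The key computational input is the $\pi$-rotation $\iota$ about the marked point $q + \sqrt{n}\,i$ of the vertical edge $[\infty, q]$ in $\TTT_J$, where $n \in \{n_1, \ldots, n_s\}$ is its type by Proposition \ref{p:basic}(5). Applying the general formula for a rotation about $a + bi \in \HH$ (trace zero, fixing $a+bi$) and clearing the irrational denominator $\sqrt{n}$, I obtain the coprime integer representative
\[\iota = \begin{pmatrix} q & -(q^2 + n) \\ 1 & -q \end{pmatrix} \in \PGLtwoQ,\]
of determinant $n$, with $\iota(\infty) = q$ and first column $(q, 1)^T$. If the edge $[\infty, q]$ is exterior in $\QQQ_J$, then $\iota$ is literally one of the generating involutions of $\Gamma_J$, and the proof is complete.

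The main obstacle is the case when $[\infty, q]$ is interior to a tile of $\QQQ_J$, so that $\iota \notin \Gamma_J$ and a surrogate element of $\Gamma_J$ must be constructed. The plan in this case is to form a chain of cusps $\infty = u_0, u_1, \ldots, u_m = q$ in which each consecutive pair is joined by an exterior edge of $\QQQ_J$, and to take $g = \iota_m \cdots \iota_1 \in \Gamma_J$ as the composition of the associated generating involutions. Each $\iota_j$ has a coprime integer matrix constrained by the integrality of $u_{j-1}, u_j$ and the finite set of allowed edge types, and the delicate point is to verify by induction on $m$ that the composition retains the $(q, 1)^T$-column structure in coprime integer form, with no spurious common factor introduced. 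The base case $m = 1$ is exactly the exterior subcase handled above, and the inductive step reduces to checking that, given a matrix whose $(\alpha, \gamma)$-column is coprime and sends $\infty$ to an integer cusp $u_{j-1}$, left-multiplication by the next involution in the chain preserves this coprimality---an assertion that follows from the explicit integer structure of the marked points on the exterior edges.
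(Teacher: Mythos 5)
Your reduction of the problem to exhibiting one element $g\in\Gamma_J$ with $g(\infty)=q$ whose coprime integer form has first column $(q,1)^T$ is sound, and your base case is correct: when $[\infty,q]$ is an \emph{exterior} side of $\QQQ_J$, the $\pi$-rotation about $q+\sqrt{n}\,i$ lies in $\Gamma_J$ and has the matrix you display (this is exactly the computation the paper records later in Lemma \ref{lem:verticalsides}(4)). The gap is the interior case, which is the heart of the proposition. Your chain-and-induction scheme rests on two claims that are not true in general. First, the intermediate vertices $u_1,\dots,u_{m-1}$ of a chain of exterior edges of $\QQQ_J$ need not be integers (they are arbitrary rational vertices of the tessellation, e.g.\ points such as $m+3/2$ already occur), so the inductive hypothesis ``sends $\infty$ to an integer cusp $u_{j-1}$'' does not apply. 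Second, and more seriously, the asserted inductive step would prove far too much: since the partial product $\iota_j\cdots\iota_1$ sends $\infty$ to $u_j$, and right multiplication by the stabilizer of $\infty$ (powers of $T^L$) does not change the first column, coprimality of the first column of the partial product is precisely the statement that the cusp $u_j$ has contraction constant one. But cusps of contraction constant greater than one do occur in integral jigsaws: by the computation of Proposition \ref{p:nextcusps}, in an $\SSS(1,2)$ jigsaw in which the width-two side $[m,m+2]$ of a lifted $\Deltatwo$ triangle is glued to a $\Deltaone$ triangle, the element taking $\infty$ to $m+1$ has coprime integer form with first column $(2m+2,2)$, so the cusp $m+1$ has contraction constant two. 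Hence no unconditional statement of the form ``left multiplication by the next edge involution preserves coprimality of the first column'' can hold, and without a concrete (and nontrivial) recipe for choosing the chain your induction cannot close.

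The paper sidesteps products entirely. Since $S_J$ has a single cusp, the tile of $J$ whose lift is $\Delta_{[\infty,q,r]}$ has another lift $\Delta_{[w_1,w_2,w_3]}$ with $w_2=\infty$, i.e.\ with the vertex corresponding to $q$ placed at infinity, and the unique $g\in\Gamma_J$ carrying this lift onto $\Delta_{[\infty,q,r]}$ (respecting the labelling) satisfies $g(\infty)=q$. Because Proposition \ref{p:basic} pins down both lifts explicitly --- integer finite vertices and widths in $\{1,n\}$ determined by the label of $[\infty,q]$ --- this $g$ can be written out in the three cases $k=1$, $k=n$, $k=1/n$, and in each case its integer form visibly has bottom row $(1,\ast)$, giving contraction constant one and the killer interval $(q-1,q+1)$ via Proposition \ref{p:interval}. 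To repair your argument you should replace the arbitrary chain by this single explicitly computable element (or prove an actual lemma selecting chains along which coprimality is preserved, which is essentially as hard as the proposition itself).
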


\noindent Note that it is possible that the side $e=[\infty,q] \in \mathcal F_J \setminus \mathcal E_J$, that is, $e$ may be an interior side.

\medskip
\begin{proof} This is basically a simple computation. We have $\Delta_{[\infty, q,r]}\subset \TTT_J$ is the lift of some tile of $J$, we consider another lift $\Delta_{[w_1,w_2,w_3]}$ of the same tile of $J$ such that $w_2=\infty$ (this is always possible since $\Gamma_J$ has only one cusp). Then there is an element $g \in \Gamma_J$ which takes the ordered triple $(w_1,w_2, w_3)=(w_1,\infty, w_2)$ to $(v_1, v_2,v_3)=(\infty, q,r)$, and furthermore, $g$ is completely determined by these pair of  triples. We separate the analysis into the three cases depending on the value of the parameter $k$ at the side $[\infty, q]$.
	
\begin{itemize}
	\item Case 1: $k=1$. In this case, $r=q+1$ and $w_3=m$, $w_1=m+1$ for some integer $m$. Then,
	
	\[
	g= 
	\left( \begin{array}{cc} 
	q & -1-q-mq \\  
	1 & -1-m \\
	\end{array} \right)
	\]
	and hence the killer interval about $q$ is $(q-1,q+1)$ by proposition \ref{p:interval}.
	\item Case 2: $k=n\in \{n_2, \ldots, n_s\}$. In this case, $r=q+n$ and $w_3=m$, $w_1=m+1$ for some integer $m$. Then,
	\[g= 
	\frac{1}{\sqrt{n}} \left( \begin{array}{cc} 
	q & -n-q-mq \\  
	1 & -1-m \\
	\end{array} \right)
	\]
	and hence the killer interval about $q$ is $(q-1,q+1)$ by proposition \ref{p:interval}.
	\item Case 3:   $k=1/n$ where $n\in \{n_2, \ldots, n_s\}$. In this case, $r=q+1$ and $w_3=m$, $w_1=m+n$ for some integer $m$. Then,
	\[g= 
	\frac{1}{\sqrt{n}} \left( \begin{array}{cc} 
	q & -n-nq-mq \\  
	1 & -n-m \\
	\end{array} \right)
	\]
	and hence the killer interval about $q$ is $(q-1,q+1)$ by proposition \ref{p:interval}.
\end{itemize}

\end{proof}
%\marginpar{Proof can be simplified using the fact the some element of $\Gamma_J$ takes a vertical triangle}

%\marginpar{Beicheng or Duc to supply the exact matrices}

We note that any  other lift $\Delta_{[w_1,w_2,w_3]}$ of the same tile of $J$ such that $w_2=\infty$ is a translation of the one we choose by some multiple of the fundamental interval $L$. Hence, if $q$ is a cusp and $g\in \Gamma_J$ maps $\infty$ to $q$, then the set of  $A \in \Gamma_J$ mapping $\infty$ to $q$ is  \[\{A \in \PGLtwoQ ~|~ A=gT^{LM}, \quad M \in \ZZ\}.\]

\medskip

We next analyse cusps at the next level away from $\infty$:

\begin{prop}\label{p:nextcusps}
	Let $J$ be a hyperbolic jigsaw satisfying the conditions of proposition \ref{p:basic}. Suppose  that $\Delta_{[\infty, m, m+n]} \subset \TTT_J$ where $n>1$ is the lift of a $\Delta^{(n)}$ tile. Then $\Delta_{[m+n,m,m+n/2]}\subset \TTT_J$. 
	The contraction constant of the cusp $m+n/2$ is one if $n$ is odd, and either one or two, if $n$ is even.
\end{prop}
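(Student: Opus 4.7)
The plan is to handle the two assertions separately: first to establish that $\Delta_{[m+n,m,m+n/2]}$ belongs to $\TTT_J$ via the matching condition along the type-$1$ side $[m,m+n]$, and then to produce an explicit element of $\Gamma_J$ sending $\infty$ to $m+n/2$ from which the contraction constant can be read off.

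For the first assertion, I would invoke Proposition \ref{p:basic}(3): since $\Delta_{[\infty, m, m+n]}$ is a $\Delta^{(n)}$ tile with parameters $(k_1, k_2, k_3) = (n, 1, 1/n)$, the side $[m,m+n]$ carries parameter $k_2 = 1$. Its marked point is therefore the foot of the perpendicular from the opposite vertex $\infty$, namely the apex $(m+n/2, n/2)$ of the bounding semicircle. Let $\Delta' \in \TTT_J$ be the tile across this side, with third vertex $v \in (m, m+n) \subset \RR$. By the matching condition, the corresponding side of $\Delta'$ also has parameter $1$ and the same marked point $(m+n/2,n/2)$, which is therefore the foot of the perpendicular from $v$. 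The geodesic perpendicular to the semicircle $[m,m+n]$ at its apex is the vertical line $x=m+n/2$, which meets $\RR$ only at $m+n/2$; hence $v = m+n/2$.

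For the contraction constant, $\Delta'$ is a lift of some jigsaw tile $\Delta^{(k)}$ with $k \in \{1, n_2, \dots, n_s\}$, and $m+n/2$ is the vertex of $\Delta'$ opposite its type-$1$ side. I would then take another lift of the same tile with that distinguished vertex placed at $\infty$; after cyclically permuting the parameters so that the vertex opposite the type-$1$ side becomes $v_1$, Proposition \ref{p:basic} (case (3) when $k>1$ and case (1) when $k=1$) forces this lift to have the form $\Delta_{[\infty, m', m'+k]}$ for some $m' \in \ZZ$, with type-$1$ side $[m', m'+k]$. Matching cyclic orders, the unique orientation-preserving $g \in \Gamma_J$ relating these two lifts must send $(\infty, m', m'+k) \to (m+n/2, m+n, m)$. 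Solving these three point conditions and normalizing the determinant yields
\[
g = \frac{1}{\sqrt{nk}}\begin{pmatrix} 2m+n & -(2m+n)m' - (m+n)k \\ 2 & -(2m'+k) \end{pmatrix},
\]
represented in $\PGLtwoQ$ by the integer matrix $g' = \sqrt{nk}\,g$ of determinant $nk$.

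To read off the contraction constant I would reduce $g'$ to coprime entries and compute $\gcd(\alpha,\gamma)$. When $n$ is odd, $2m+n$ is odd so $g'$ is already primitive and $\gcd(2m+n,2)=1$. When $n$ is even and $k$ is odd, $2m'+k$ is odd so $g'$ is still primitive and $\gcd(2m+n,2)=2$. When $n$ and $k$ are both even, all four entries of $g'$ are even; dividing by $2$ leaves a matrix with $\gamma=1$, giving $\gcd((2m+n)/2,1)=1$. Thus the contraction constant is $1$ when $n$ is odd, and is $1$ or $2$ when $n$ is even, depending on the parity of the adjacent tile index $k$. The principal bookkeeping hazard is fixing the cyclic correspondence between the two lifts correctly so that $g$ has positive determinant and lies in $\Gamma_J \subset \PSLtwoR$; the wrong pairing produces an orientation-reversing $\PGL$ element. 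Once the orientation is pinned down, everything reduces to routine $2\times 2$ matrix computations.
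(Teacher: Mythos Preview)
Your proof is correct and follows essentially the same approach as the paper: both arguments identify the adjacent triangle via the parameter-$1$ matching on $[m,m+n]$, take a second lift of that tile with the distinguished vertex at $\infty$, compute the Möbius map $g$ sending $(\infty,m',m'+k)$ to $(m+n/2,m+n,m)$, and read off the contraction constant from the first column of the integer matrix. Your geometric justification for $v=m+n/2$ (via the foot-of-perpendicular characterisation of a parameter-$1$ marked point) is a bit more explicit than the paper's one-line remark, and your parity analysis in the even-$n$ case is in fact sharper: the paper's final sentence has the two subcases swapped (it should be contraction constant $2$ when $k$ is odd and $1$ when $k$ is even, as you found), though this does not affect the proposition as stated.
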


%\begin{prop}\label{p:nextcusps}
%	Let $J$ be a hyperbolic jigsaw satisfying the conditions of proposition \ref{p:basic}. Suppose either that $\Delta_{[\infty, m,m+1]} \subset \TTT_J$ is the lift of a $\Delta^{(1)}$ tile, or that $\Delta_{[\infty, m, m+n]} \subset \TTT_J$ where $n>1$ is the lift of a $\Delta^{(n)}$ tile. Then $\Delta_{[m+n,m,m+n/2]}\subset \TTT_J$ (where $n=1$ in the first case). 
%	The contraction constant of the cusp $m+n/2$ is one if $n$ is odd, and either one or two, if $n$ is even.
%\end{prop}

\begin{proof} We first note that $\Delta_{[v_1,v_2,v_3]}:=\Delta_{[m+n,m,m+n/2]}\subset \TTT_J$ since the parameter at the side $[m, m+n]$ is one. Then the parameters at the sides $[m, m+n/2]$ and $[m+n/2,m+n]$ are $1/k$ and $k$ respectively, where $k \in \{n_1, \ldots, n_s\}$. Let $\Delta_{[w_1,w_2,w_3]}$ be another lift of the same tile as $\Delta_{[v_1,v_2,v_3]}$ where $w_3=\infty$ corresponds to $v_3=m+n/2$. Then the parameter of the side $[w_3,w_1]$ is $k$, so $w_1=r$, $w_2=r+k$ for some $r \in \ZZ$. Hence there is an element $g \in \Gamma_J$ taking the ordered triple $(\infty, r, r+k)$ to $(m+n/2, m+n, m)$. A direct computation gives
	\[g= 
	\frac{1}{\sqrt{kn}} \left( \begin{array}{cc} 
	2m+n & -r(2m+n)-k(m+n)\\  
	2 & -2r-k \\
	\end{array} \right)
	\] 
	Hence the contraction constant at $m+n/2$ is one if $n$ is odd, and either one or two if $n$ is even, depending on whether $k$ is odd or even respectively.

\end{proof}
%\marginpar{Beicheng or Duc to supply the details}

We next show that many of the sides of an integral $J$-jigsaw cannot be type $1$, which will be useful when we are determining if the group $\Gamma_J$ is arithmetic or not.

\begin{prop}\label{p:sidelabel}
	Let $J$ be a $\SSS(n_1,n_2, \ldots, n_s)$-jigsaw. For any $n \in \{n_2, \ldots n_s\}$, at least two of the  sides of $J$ are type $n$.
\end{prop}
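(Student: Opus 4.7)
The plan is to count type-$n$ sides of $J$ by analyzing the gluing combinatorially, then invoke simple-connectedness of $J$ to rule out the degenerate case.

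First I would record the local side data. For $n > 1$, the tile $\Delta^{(n)} = \Delta(1,1/n,n)$ has exactly one side of parameter $1$, one side of parameter $n$, and one side of parameter $1/n$; the latter two are its type-$n$ sides. Since sides only match when their parameters agree, and for $n > 1$ the values $n$ and $1/n$ appear on no other tile in $\SSS(n_1, \ldots, n_s)$, every type-$n$ side of a tile in $J$ is either glued to another type-$n$ side of a $\Delta^{(n)}$ tile (becoming an interior side of $J$) or is an exterior side of $J$. Moreover $n$-parameter sides can only match $n$-parameter sides, and $1/n$ only $1/n$.

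Next I would do the counting. Let $m_n \geq 1$ be the number of $\Delta^{(n)}$ tiles in $J$ (at least one exists by the definition of a jigsaw set). Write $I_n, I_{1/n}$ for the numbers of interior sides of $J$ with parameter $n, 1/n$ respectively, and $E_n, E_{1/n}$ for the corresponding exterior counts. Each interior side is glued from two tile-sides and each exterior side from one, so
\[
2I_n + E_n = m_n, \qquad 2I_{1/n} + E_{1/n} = m_n.
\]
Adding gives $E_n + E_{1/n} = 2m_n - 2(I_n + I_{1/n})$, which is always even. So it suffices to exclude $E_n + E_{1/n} = 0$.

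The main (but small) obstacle is ruling out this degenerate case, and here I would use topology. Since $J$ is an ideal polygon tiled by $N = |J|$ triangles, its dual graph $G_J$ (one vertex per tile, one edge per interior side) has $N$ vertices and $N-1$ edges and is a tree. Now consider the spanning subgraph $G_J^{(n)}$ on the $\Delta^{(n)}$-vertices whose edges are precisely the interior sides of type $n$. Each $\Delta^{(n)}$ tile has only two type-$n$ sides, so every vertex of $G_J^{(n)}$ has degree at most $2$; being a subgraph of a tree, $G_J^{(n)}$ contains no cycles. Therefore $G_J^{(n)}$ is a disjoint union of simple paths. If $E_n + E_{1/n} = 0$, then every type-$n$ side of every $\Delta^{(n)}$ tile is interior, so every vertex of $G_J^{(n)}$ has degree exactly $2$, forcing each component to be a cycle — contradicting that $G_J^{(n)}$ is a forest. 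Hence $E_n + E_{1/n} \geq 2$, which (since we have seen the sum is even and nonzero, and $m_n \geq 1$ guarantees $G_J^{(n)}$ is nonempty) completes the proof. One can in fact read off the sharper statement that each path component of $G_J^{(n)}$ contributes exactly two exterior type-$n$ sides, namely the two unmatched type-$n$ sides at its endpoints.
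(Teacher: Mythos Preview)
Your proof is correct and follows essentially the same idea as the paper: both arguments walk along the chain of $\Delta^{(n)}$ tiles joined by type-$n$ sides and use the tree structure of the dual graph (equivalently, finiteness without cycles) to force the chain to terminate at two exterior type-$n$ sides. Your counting/dual-graph formulation is more explicit and yields the parity refinement as a bonus, but the underlying mechanism is the same as the paper's terse inductive chain-following.
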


\begin{proof}
	Let $n \in \{n_2, \ldots n_s\}$. By definition, $J$ contains a $\Delta^{(n)}$ tile. If its two type $n$ sides  are already sides of $J$, we are done. Otherwise, say one of the type $n$ sides is not a side of $J$. It must be matched to another $\Delta^{(n)}$ tile which now has a new type $n$ side. This is either a side of $J$ or matched to another $\Delta^{(n)}$ tile. Proceeding inductively, since $|J|$ is finite, we eventually must have at least two sides of $J$ which are type $n$. 
	\end{proof}

\subsection{Non-integral jigsaws}\label{ss:non-integral} Much of the discussion above can be applied to non-integral hyperbolic jigsaws but without the integrality conditions, it is difficult to obtain useful conclusions. For example, the fundamental interval $[0,L]$ may not have integral length, the vertical sides of $\TTT_J$ may not have integer ends, and typically, the contraction constant about a cusp at the end of a vertical side of $\TTT_J$ may be greater than one. The analysis of such jigsaws is therefore more complicated.

\section{Cusp set of $\SSS(1,2)$ Jigsaw groups}\label{s:onetwojigsaws} We apply the results of \S\ref{ss:integraljigsaws} to $\SSS(1,2)$ jigsaws which are particularly simple.  
What is interesting is that when we combine  $\Delta(1,1,1)$ tiles with $\Delta(1,1/2,2)$ tiles, the arithmetic property of the Weierstrass groups $\Gamma(1,1,1)$ and $\Gamma(1,1/2,2)$ is destroyed while the property that the cusp set is $\QQ \cup \{\infty\}$ is preserved. In this section, we analyse the cusp set.

\subsection{The Weierstrass groups  $\Gamma(1,1,1)$ and $\Gamma(1,1/2,2)$.} We first note that the Weierstrass group $\Gamma(1,1,1)$ associated to the tile $\Delta(1,1,1)$ is just an index three subgroup of the modular group $\PSLtwoZ$, hence arithmetic. In standard position, the three generators are 
\[\imath_1 = \left(
\begin{array}{cc}
1 & 2 \\
-1 & -1 \\
\end{array}
\right), \quad
\imath_2 = \left(
\begin{array}{cc}
1 & 1 \\
-2 & -1 \\
\end{array}
\right), \quad
\imath_3 = \left(
\begin{array}{cc}
0 & 1 \\
-1 & 0 \\
\end{array}
\right).\] 
The Weierstrass group $\Gamma(1,1/2,2)$ is also arithmetic. In standard position, the three generators  are
\[
\imath_1 = \left(
\begin{array}{cc}
	1 & 2 \\
	-1 & -1 \\
\end{array}
\right), \quad
\imath_2 =\frac{1}{\sqrt{2}} \left(
\begin{array}{cc}
	2 & 2 \\
	-3 & -2 \\
\end{array}
\right), \quad
\imath_3 = \frac{1}{\sqrt{2}}\left(
\begin{array}{cc}
	0 & 2 \\
	-1 & 0 \\
\end{array}
\right).
\]  
It is commensurable to the group associated to the square torus and also the Hecke group $G_4$. In particular  $(\tr\imath_1\imath_2)^2$, $(\tr\imath_2\imath_3)^2$ and $(\tr\imath_3\imath_1)^2$ are all integers. Note that $x_1$, the fixed point of $\imath_1$, is at height one above the real axis and $x_3$, the fixed point of $\imath_3$, is at height $\sqrt{2}$ above the real axis.

\subsection{Killer intervals and cusp set of $\SSS(1,2)$-jigsaws} We have:
%show that all $\SSS(1,2)$-jigsaws have cusp set $\QQ \cup \{ \infty\}$.
\begin{prop}\label{cuspset12}
	If $J$ is an $\SSS(1,2)$ jigsaw, then $K_J$, the cusp set of $\Gamma_J$   is $\QQ \cup \{\infty\}$.
\end{prop}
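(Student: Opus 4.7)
The plan is to invoke Proposition \ref{prop:killer}, which reduces the problem to covering a fundamental interval $[0, L]$ for the cusp at $\infty$ by finitely many killer intervals. With signature $sgn(J) = (m_1, m_2)$, the $J$-width formula yields $L = 3m_1 + 4m_2 \in \ZZ$, so the fundamental interval has integer endpoints.

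Next I would classify the relevant cusps. By parts (1)--(4) of Proposition \ref{p:basic}, every triangle of $\TTT_J$ with vertex at $\infty$ has its other two vertices at consecutive integers $m, m+1$ (width $1$) or at integers $m, m+2$ (width $2$, necessarily a $\Delta^{(2)}$ tile in the configuration of part (3)). By Proposition \ref{p:integercusps}, each such integer vertex $q$ is a cusp with contraction constant $1$, giving a killer interval $(q-1, q+1)$. For each width-$2$ triangle $\Delta_{[\infty, m, m+2]} \subset \TTT_J$, the interior integer $m+1$ is again a cusp by Proposition \ref{p:nextcusps}, with contraction constant $1$ or $2$, hence killer radius at least $1/2$.

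The key geometric observation is that in the $\SSS(1,2)$ setting, two width-$2$ triangles at $\infty$ can never be adjacent. Indeed, the right side of a part (3) triangle carries label $1/2$, so by the matching condition the triangle immediately to the right must sit in the configuration of part (4) (the only case with $k_1 = 1/2$), which has width $1$ at $\infty$; a symmetric argument rules out a width-$2$ triangle on the left. Consequently, every integer $q \in [0, L]$ is either an $\infty$-vertex (call this case A, killer radius $1$) or the midpoint of a width-$2$ base (case B, killer radius $\geq 1/2$, both of its integer neighbours being of case A type).

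It remains to piece the killer intervals together. Strings of consecutive case-A integers are already covered by their overlapping length-$2$ killer intervals. When an integer $q$ is of case B, its own killer covers at least $(q - 1/2, q + 1/2)$, while the killers of the flanking case-A cusps $q-1$ and $q+1$ cover $(q-2, q)$ and $(q, q+2)$ respectively; their union is $(q-2, q+2)$. Thus every point of $[0, L]$ lies in some killer interval, and Proposition \ref{prop:killer} gives $K_J = \QQ \cup \{\infty\}$. The only genuinely nontrivial step is the matching-condition check that width-$2$ triangles at $\infty$ are isolated; everything else is a direct application of the tools developed in \S\ref{s:basic}.
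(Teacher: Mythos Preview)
Your proof is correct and follows essentially the same approach as the paper's: both arguments show that the integer cusps together with the midpoint cusps $m+1$ of the width-$2$ triangles yield killer intervals whose union covers the fundamental interval, invoking Propositions \ref{p:basic}, \ref{p:integercusps}, and \ref{p:nextcusps} in the same way. Your write-up is in fact more explicit than the paper's about how the point $m+1$ gets covered (using its own killer interval of radius $\ge 1/2$ to bridge the gap between $(m-1,m+1)$ and $(m+1,m+3)$), whereas the paper simply asserts that the killer intervals about the integers cover $\RR$; but the substance is identical.
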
 

\begin{proof}
	This follows easily from proposition \ref{p:basic}, \ref{p:integercusps} and \ref{p:nextcusps} as applied to $\SSS(1,2)$ jigsaws. First,  if $\Delta:=\Delta_{[\infty,v_2,v_3]} \subset \TTT_J$  and the parameter at $s_1=[\infty, v_2]$ is $k_1=2$, then by proposition \ref{p:basic}, \\
	(i) $v_2=m$, $v_3=m+2$ for some integer $m$, and \\
	(ii) $k_2=1$, $k_3=1/2$. \\
	(iii) the triangles of $\TTT_J$ to the left and right of $\Delta$ have width one.
	
	By proposition \ref{p:nextcusps}, $m+1$ is a vertex of the  triangle $\Delta'$ of $\TTT_J$ which shares the side $[m,m+2]$ with $\Delta$ (since $k_2=1$). All other types of triangles of $\TTT_J$ with $\infty$ as a vertex have width one. Hence, every $m \in \ZZ$ is a cusp of $\Gamma_J$.  Next, by proposition \ref{p:integercusps}, if $[\infty, m]$ is a side of $\TTT_J$, then $m \in \ZZ$ and the killer interval about $m$ is $(m-1,m+1)$. It now follows 
	that the set of killer intervals about the integers cover all of $\RR$ and Proposition \ref{prop:killer} applies.
	\end{proof}

\section{Cusp set of $\SSS(1,3)$ Jigsaws}\label{s:onethreejigsaws}
In this section, we consider $\SSS(1,3)$-jigsaws and show that the cusp set is either all of $\QQ \cup \{\infty \}$, or there are specials (fixed points of hyperbolic elements of $\Gamma_J$) which are in the equivalence classes of certain integers. The latter case occurs if and only if there is a closed geodesic $\gamma \subset S_J$ with $W$-cutting sequence $\overline{1/3,1,3,1}$.

\subsection{The Weierstrass group $\Gamma(1,1/3,3)$.}
In standard position, we get from (\ref{eqn:involutions}) that $\Gamma(1,1/3,3)$ is generated by
 \begin{equation}
 \imath_1=\left(
 \begin{array}{cc}
1 & 2 \\
 -1 & -1 \\
 \end{array}
 \right), \quad
 \imath_2=\frac{1}{\sqrt{3}} \left(
 \begin{array}{cc}
 3 & 3 \\
 -4 & -3 \\
 \end{array}
 \right), \quad
 \imath_3=\frac{1}{\sqrt{3}} \left(
 \begin{array}{cc}
 0 & 3 \\
 -1 & 0 \\
 \end{array}
 \right)\end{equation}
% \marginpar{Beicheng to provide the figures to illustrate the description below}

%%%%%%%%%%%%%%%%%%%%%%%%%%%%%%%%%%%%%%%%%%%%%%%%%%%%%%%%%
\begin{figure}[hbt]\centering{\includegraphics[height=6cm]{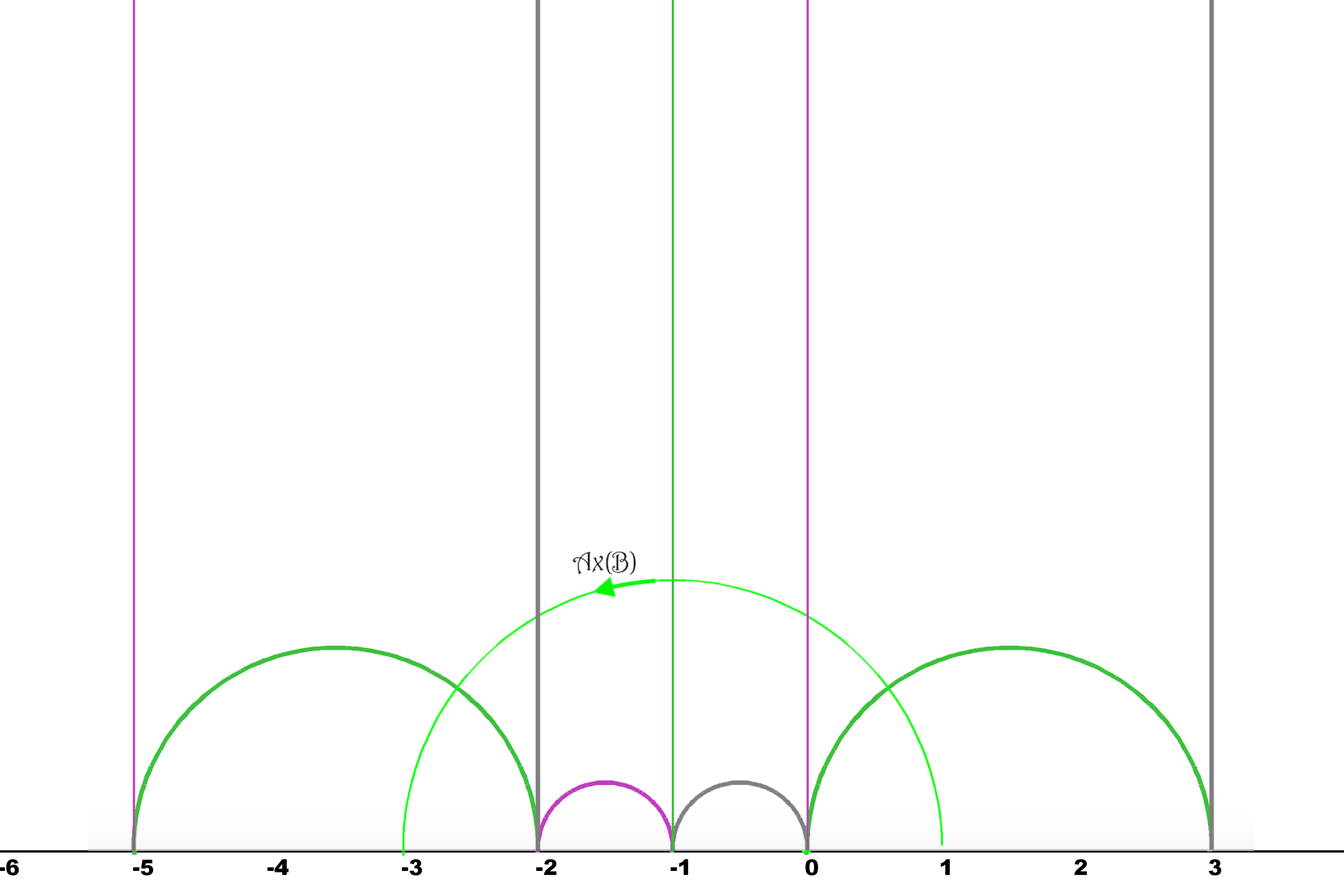}}
	\caption{Tessellation associated to $\Gamma(1,1/3,3)$ and axis of the hyperbolic element $B$ with endpoints at $-3$ and $1$.}  
	\label{fig:Deltathree} 
\end{figure}
%%%%%%%%%%%%%%%%%%%%%%%%%%%%%%%%%%%%%%%%%%%%%%%%%%%%%%%%%

 A fundamental interval for $\Gamma(1,1/3,3)$ is of length $5$, we may take this to be the closed interval $[-2,3]$, with vertical sides $[\infty, -2]$, $[\infty, -1]$, $[\infty, 0]$ and  $[\infty, 3]$  within this interval, see figure \ref{fig:Deltathree}. The killer intervals about $-2, -1,0$ and  $3$  all have width $2$, they are $(-3,-1)$, $(-2,0)$, $(-1,1)$ and $(2,4)$  respectively. By proposition \ref{p:nextcusps}, the triangle $\Delta_{[0,3/2,3]}$ adjacent to $\Delta_{[\infty,0,3]}$  is a triangle of $\TTT$ and the contraction constant at $3/2$ is $1$, so the killer interval for the cusp $3/2$ as $(1,2)$. It follows that the killer intervals about $-2, -1,0,3/2$ and $3$ cover all of $[-2,3]$ except possibly the points $1$ and $2$. In fact, these are specials and cannot be covered by any killer intervals. A direct computation shows that   $x_1=-3$ and $x_2=1$ are the attracting and repelling fixed points of the element \[B=(\imath_1 \imath_2\imath_1)\imath_3=\frac{1}{3}\left(
 \begin{array}{cc}
 7 & -6 \\
 -2 & 3 \\
 \end{array}
 \right)
 \in \Gamma(1,1/3,3)\] and $y_1=2$, $y_2=6$ are the attracting and repelling fixed points of 
 \[\imath_3\imath_2\imath_1B(\imath_3\imath_2\imath_1)^{-1}=\frac{1}{3}\left(
 \begin{array}{cc}
 -3 & 24 \\
 -2 & 13 \\
 \end{array}
 \right) \in \Gamma(1,1/3,3).\]  Since $B$ is the product of the two involutions $\imath_1\imath_2\imath_1$ and $\imath_3$, its invariant axis passes through the fixed points $-2+\sqrt{3}i$ and $\sqrt{3}i$ of these elements ( in fact, it also passes through  $-1+2i$), and $B$ translates by twice the distance between the points $-2+\sqrt{3}i$ and $\sqrt{3}i$ along this axis. The $W$-cutting sequence of the axis is $\overline{1/3,1,3,1}$. Furthermore, $i_3$ interchanges $1$ and $-3$. It follows that every rational number is either a cusp or is equivalent to the special $1$. If we consider the punctured torus $T$ which double covers the surface $S=\HH/\Gamma(1,1/3,3)$ (with the induced labeled triangulation), there is a (unique) simple closed geodesic $\gamma\subset T$ which has $W$-cutting sequence  $\overline{1/3,1,3,1}$ whose fixed points are equivalent to $1$ or $-3$.

% %%%%%%%%%%%%%%%%%%%%%%%%%%%%%%%%%%%%%%%%%%%%%%%%%%%%%%%%%
% \begin{figure}[hbt]\centering{\includegraphics{stoke1.png}}
% 	\caption{}  
% 	\label{} 
% \end{figure}
% %%%%%%%%%%%%%%%%%%%%%%%%%%%%%%%%%%%%%%%%%%%%%%%%%%%%%%%%%

 \subsection{Killer intervals and cusp set of $\SSS(1,3)$-jigsaws}
 \begin{prop}\label{p:cuspset13}
 	Let $J$ be a $\SSS(1,3)$ jigsaw. The cusp set $K_{\Gamma}$ of $\Gamma_J$ is a proper subset of  $\QQ \cup \{\infty\}$ if and only if the surface $S_J$ contains a closed geodesic $\gamma$ with $W$-cutting sequence $\overline{1/3,1,3,1}$.
 \end{prop}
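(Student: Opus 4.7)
The plan is to combine the killer interval machinery of Section \ref{s:killer} with the structural results from earlier in this section to pin down the only possible non-cusps, and then match these directly to the specific closed geodesic.

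By Proposition \ref{p:basic}, the triangles $\Delta_{[\infty,v_2,v_3]} \subset \TTT_J$ adjacent to $\infty$ have widths $v_3 - v_2 \in \{1,3\}$, and by Proposition \ref{p:integercusps} every integer vertex of such a triangle is a cusp whose killer interval has half-width $1$. For each width-$3$ triangle $\Delta_{[\infty,m,m+3]}$, Proposition \ref{p:nextcusps} yields an additional cusp $m+3/2$ with contraction constant $1$ (since $3$ is odd), whose killer interval is $(m+1,m+2)$. The union of these killer intervals covers the fundamental interval $[0,L]$ except at the two integers $m+1$ and $m+2$ inside each width-$3$ gap. Hence, by Proposition \ref{prop:killer}, every rational is either a cusp or $\Gamma_J$-equivalent to one of these \emph{exceptional integers}.

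For the forward direction, suppose $S_J$ contains a closed geodesic $\gamma$ with $W$-cutting sequence $\overline{1/3,1,3,1}$. By the remark after Theorem \ref{thm:S13}, $\gamma$ visits only $\Delta^{(3)}$ tiles. Lifting $\gamma$ to the axis $\tilde\gamma$ of a primitive hyperbolic $B \in \Gamma_J$ and using a $\Gamma_J$-translate to place one of the four traversed tiles in a fixed width-$3$ position, $\tilde\gamma$ agrees (up to integer translation) with the axis of $B_W = (\imath_1\imath_2\imath_1)\imath_3 \in \Gamma(1,1/3,3)$ from the preceding subsection, whose endpoints are the integers $-3$ and $1$. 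These integer endpoints are fixed by $B$ and are therefore rational specials of $\Gamma_J$, giving $K_\Gamma \subsetneq \QQ \cup \{\infty\}$.

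For the converse, suppose $K_\Gamma \subsetneq \QQ \cup \{\infty\}$. By the first paragraph, some exceptional integer $m+1$ (or $m+2$) inside a width-$3$ gap must fail to be a cusp. Since any rational boundary point of a cofinite-area Fuchsian subgroup of $\PGLtwoQ$ is either a parabolic or a hyperbolic fixed point, $m+1$ is fixed by a primitive hyperbolic $B \in \Gamma_J$. I trace the axis $\tilde\gamma$ of $B$: leaving $m+1$ it enters the tile $\Delta_{[m+3,m,m+3/2]}$, then rises into $\Delta_{[\infty,m,m+3]}$, passing through the marked point of height $\sqrt{3}$ on a vertical type-$3$ side of $\TTT_J$ recorded by Proposition \ref{p:basic}(5). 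The $B$-periodicity of $\tilde\gamma$, together with the fact that its other endpoint is likewise an exceptional integer, forces the sequence of tile crossings to consist entirely of $\Delta^{(3)}$ tiles reproducing the $W$-cutting sequence $\overline{1/3,1,3,1}$. Quotienting $\tilde\gamma$ by $\langle B\rangle$ then yields the required closed geodesic on $S_J$.

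The main obstacle is the last step of the converse: ruling out the possibility that $\tilde\gamma$ enters a $\Delta^{(1)}$ tile, which would force a different cutting sequence. The key is that any $\Delta^{(1)}$ tile encountered below the width-$3$ gap would trigger a dyadic bisection subdivision of $[m,m+3]$ with vertex denominators that are powers of $2$; together with the hypothesis that $m+1$ is an isolated non-cusp rational, this leads to a contradiction with the fact that $m+1$ has denominator coprime to $2$ and cannot arise as a vertex of any such dyadic subdivision.
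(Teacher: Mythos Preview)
Your setup with killer intervals is correct and matches the paper, and your forward direction is fine. The converse, however, has two genuine gaps.

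First, the sentence ``Since any rational boundary point of a cofinite-area Fuchsian subgroup of $\PGLtwoQ$ is either a parabolic or a hyperbolic fixed point, $m+1$ is fixed by a primitive hyperbolic $B\in\Gamma_J$'' is false as stated. For a general lattice in $\PGLtwoQ$ there is no such dichotomy: a rational can perfectly well be neither a cusp nor a hyperbolic fixed point. That $m+1$ turns out to be a special is one of the \emph{conclusions} of this proposition, not an input. The paper avoids this entirely: it never assumes periodicity of a lift. Instead it takes the geodesic \emph{ray} $\alpha$ from $-1+2i$ to the non-cusp (normalised to $1$) and analyses its one-sided cutting sequence directly.

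Second, your final paragraph --- ruling out $\Delta^{(1)}$ tiles via a ``dyadic bisection subdivision with vertex denominators powers of $2$'' --- does not work here: there is nothing dyadic about the $\SSS(1,3)$ picture, and the vertices of $\TTT_J$ below $[m,m+3]$ are not governed by $2$-adic denominators. The paper's argument is quite different and more delicate: it exploits the hyperbolic element $B=(\imath_1\imath_2\imath_1)\imath_3\in\Gamma(1,1/3,3)$ (which need not lie in $\Gamma_J$) whose axis ends at $1$. If $\alpha$ ever meets a $\Delta^{(1)}$ tile after $4k$ or $4k+2$ consecutive $\Delta^{(3)}$ tiles, one applies $B^{k}$ to transport that $\Delta^{(1)}$ tile back to one of two fixed positions, checks directly that the \emph{next} triangle then has $1$ as a vertex, and pulls back by $B^{-k}$ to conclude $1$ is a cusp --- contradiction. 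Only then, knowing $\alpha$ has periodic $W$-sequence $\overline{1,3,1,1/3}$, does the paper use a pigeonhole argument on the finitely many edges of $J$ (not an assumed hyperbolic $B\in\Gamma_J$) to extract a closed geodesic on $S_J$ with that cutting sequence.
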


\begin{proof} ($\Longleftarrow$)
	%Let $\TTT_J$ be the triangulation of $\HH$ associated to $J$.
 Suppose that $S_J$ contains a geodesic $\gamma$ with $W$-cutting sequence $\overline{1/3,1,3,1}$. Note that $\gamma$ only intersects $\Delta^{(3)}$ triangles, that is the $V$-cutting sequence of $\gamma$ is $\overline{\Delta^{(3)}}$. We may lift one of these triangles to $\Delta_{[\infty, m, m+3]}\subset \TTT_J$, where $m \in \ZZ$. Without loss of generality, suppose that $\widetilde{\gamma}$ intersects the sides $[\infty, m]$ and $[m, m+3]$ of $\Delta_{[\infty, m, m+3]}$ (which gives the $3,1$ part of the $W$-cutting sequence). From the analysis of the Weierstrass group $\Gamma(1,1/3,3)$, since $\widetilde{\gamma}$ has $W$-cutting sequence $\overline{1/3,1,3,1}$, we see that the endpoints of $\widetilde{\gamma}$ are $m-3$ and $m+1$ which are {\em specials} of $\Gamma_J$ (rational fixed points of hyperbolic elements of $\Gamma$). Hence, $K_{\Gamma}$ is a proper subset of $\QQ \cup \{\infty\}$. 
	
	\medskip
	
	\noindent ($\Longrightarrow$) Conversely, suppose that  $K_{\Gamma}$ is a proper subset of $\QQ \cup \{\infty\}$. By proposition \ref{p:basic}, all triangles $\Delta_{[\infty, q,r]} \subset \TTT_J$ have width either 1 or 3, and furthermore, the triangles with width 3 have triangles of width 1 on either side. The killer intervals about the cusps $q$ are $(q-1,q+1)$ by proposition  \ref{p:integercusps}. Furthermore, if $\Delta_{[\infty, q,q+3]} \subset \TTT_J$ has width three, then by proposition \ref{p:nextcusps}, $q+3/2$ is a cusp with killer interval $(q+1,q+2)$. It follows that the fundamental interval $[0,L]$ can be covered by the killer intervals except possibly for a finite number of points. These points are of the form $m+1, m+2$ where $\Delta_{[\infty, m, m+3]} \subset \TTT_J$. Since 
	$K_{\Gamma}$ is a proper subset of $\QQ \cup \{\infty\}$, there is at least one such point which is not covered by any killer interval. Without loss of generality, by conjugating by a suitable translation, and to simplify the computations, we may assume that $m=0$, and the point $1$ is not covered by any killer interval where $\Delta_{[\infty, 0, 3]}\subset \TTT_J$. In particular, $1$ is not a cusp of $\Gamma_J$. Let $\alpha$ be the geodesic ray from $y=-1+2i$ ending at $1$. Note that $\alpha$ is contained in the axis of 
	\[B=(\imath_1 \imath_2\imath_1)\imath_3=\frac{1}{3}\left(
	\begin{array}{cc}
	7 & -6 \\
	-2 & 3 \\
	\end{array}
	\right)
	\in \Gamma(1,1/3,3)\] 
	 The first two terms of the $V$-cutting sequence of $\alpha$  are $\Delta^{(3)}, \Delta^{(3)},\dots$, and the first three terms of the $W$-cutting sequence of $\alpha$ is $1,3,1, \dots$ where both cutting sequences are infinite as $1$ is not a cusp. We claim the $V$-cutting sequence of $\alpha$ cannot contain any $\Delta^{(1)}$, if it does, it will be finite which implies $1$ is a cusp, giving a contradiction. First, suppose that $\Delta_{[3,0,3/2]} \subset \TTT_J$ is a $\Delta^{(1)}$ tile. Then a direct computation gives that the neighboring triangle sharing the side $[0,3/2]$ with  $\Delta_{[3,0,3/2]}$ is $\Delta_{[3/2,0,1]}$ so that $1$ is a cusp. So the $V$-cutting sequence of $\alpha$ starts with at least four $\Delta^{(3)}$ triangles ( the fourth triangle is also  a $\Delta^{(3)}$ triangle since the side adjoining the third triangle has label in the set $\{3,1/3\}$). By a similar argument, if the $V$ cutting sequence of $\alpha$ contains a $\Delta^{(1)}$ triangle, the number of $\Delta^{(3)}$ triangles before the first $\Delta^{(1)}$ triangle in the cutting sequence is even , that is either $4k$ or $4k+2$ for some $k \in \NN$. Consider the action of $B^k$ on $\alpha$ and the triangles of $\TTT_J$ which it intersects. This maps the first $\Delta^{(1)}$ triangle intersecting $\alpha$ to either $\Delta_{[\infty, -1,0]}$ or $\Delta_{[3,0,3/2 ]}$, while fixing $1$. In either case, the next triangle intersected by $\alpha$ has $1$ as a vertex (either $\Delta_{[\infty, 0,1]}$ or $\Delta_{[3/2,0,1 ]}$). Acting  by $B^{-k}$ to move back to the original configuration, we conclude that if $\alpha$ intersects a $\Delta^{(1)}$ triangle, then the next triangle it intersects must have $1$ as a vertex. Hence, since $1$ is not a cusp, we conclude that the $V$-cutting sequence of $\alpha$ is $\overline{\Delta{(3)}}$ and from the analysis of $\Gamma(1,1/3,3)$ earlier,  the $W$ cutting sequence of $\alpha$ is $\overline{1,3,1,1/3}$. Projecting back to the surface $S_J$, we get a geodesic ray $\bar \alpha \subset S_J$ with cutting sequence $\overline{1,3,1,1/3}$. Since there are only a finite of sides in the triangulation of $J$, $\bar \alpha$ intersects the some side $e$ of the triangulation twice, say at $y_1, y_2 \in \bar{\alpha}$. Consider the closed curve on $S_J$ consisting of the geodesic segment from $y_1$ to $y_2$ along $\bar{\alpha}$  followed by the geodesic on $e$ joining $y_2$ back to $y_1$. Pulling this tight produces the closed geodesic $\gamma$ on $S_J$ with $W$-cutting sequence $\overline{1,3,1,1/3}$ as required. This completes the proof.

\end{proof}

%\noindent Remark: It is easy to check for a $\SSS(1,3)$ jigsaw $J$ if $S_J$ contains a geodesic $\gamma$ with cutting sequence $\overline{1,3,1,1/3}$. We only need to consider geodesic arcs in $J$ which start and end at sides of $J$ of type $3$. If there is one such arc whose cutting sequence is a subsequence of $\overline{1,3,1,1/3}$,then its lift to $\HH$ will have cutting sequence $\overline{1,3,1,1/3}$, otherwise, no such $\gamma$ exists.

\section{Criteria for (non)-arithmeticity}\label{s:non-arith}
We first establish some general results for integral jigsaw groups.
By results of Takeuchi \cite{Tak}, to show that a  non-cocompact Fuchsian group $\Gamma \le \PSLtwoR$ of finite co-area ( with no elements of order $2$) and with invariant trace field $\QQ$ is non-arithmetic, it suffices to find an element $\gamma \in \Gamma$ such that $\tr \gamma^2 \not\in \ZZ$. Conversely, to show that $\Gamma$ is arithmetic, by \cite{HLM}, it suffices to show that  $\tr \gamma^2 \in \ZZ$ for all $\gamma$ in a generating set of $\Gamma$.
Since $\tr \gamma^2=(\tr \gamma)^2-2$, we can check if $(\tr \gamma)^2 \in \ZZ$ instead.

Recall that for any integral jigsaw $J$ with $|J|=N$, $\Gamma_J$ is freely generated by $\imath_0, \imath_1, \ldots, \imath_{N+1}$, where $\imath_j$ is the $\pi$-rotation about the marked point $x_j$ of the side $s_j$ of $J$. So $\Gamma_J\cong \ZZ_2 \ast \cdots \ast \ZZ_2$, the  free product of $N+2$ copies of $\ZZ_2$.  We will apply  results of  \cite{Tak} and \cite{HLM} to the index two subgroup $\Gamma^{(2)}_J < \Gamma_J$ consisting of elements of $\Gamma_J$ with even word length. Then $\Gamma^{(2)}_J$ is a free group of rank $N+1$ and \[\Gamma^{(2)}_J=\langle \imath_0\imath_1, \imath_0\imath_2, \ldots, \imath_0\imath_{N+1} \rangle.\]
A fundamental domain for $\Gamma^{(2)}_J$ is $J \cup \imath_0(J)$ with opposite sides identified.

Let $J$ be a $\SSS(n_1, \ldots, n_s)$-jigsaw of size $|J|=N$, with a $\Delta^{(1)}$ tile in standard position. For  $i=0, \ldots, N+1$,  let $v_0=\infty, v_1, \ldots, v_{N+1}$ be the vertices of $J$  where $v_1<v_2 < \cdots <v_{N+1}$, $s_i=[v_{i}, v_{i+1}]$ be the sides of $J$ and $k_i \in \{1,n_2, 1/n_2,\ldots, n_s,1/n_s \}$ be the label (parameter) of the side $s_i$. For $j \in \ZZ$, let  $e_j:=[\infty, m_j]$ be successive (unoriented) vertical sides of $\QQQ_J$ indexed in the negative direction (so $m_j>m_{j+1}$ for all $j \in \ZZ$), normalized so $e_0=[\infty, m_0]=[\infty, v_1]=s_0$ (so $e_{-1}=[\infty, v_{N+1}]=s_{N+1}$).  By looking at the lifts of the fundamental domain $J$ in the developing map, from the definition of the $J$-widths, and by propositions \ref{p:basic} and \ref{p:sidelabel}, we easily get:

\begin{lem}\label{lem:verticalsides}
	 Let $J$ be a $\SSS(n_1, \ldots, n_s)$-jigsaw with $|J|=N$ and for $i=0, \ldots, N+1$, and $j \in \ZZ$, let $v_i, s_i$, $k_i$ and $e_j$ be defined as above. Then
	\begin{enumerate}
		\item For each $n \in \{n_2, \ldots, n_s\}$, at least two of the sides $s_k$, $s_l$ of $J$ are type $n$.
		\item If $j \equiv i \mod (N+2)$,  $e_j$ is a lift of the side $s_i$ in $\QQQ_J$ and so has label $k_i$;
		\item If $j \equiv i \mod (N+2)$, $m_{j-1}-m_j$,  is the $J$-width of $v_i$; 
		\item If $j \equiv i \mod (N+2)$,  and $k_i \in \{n,1/n\}$ where $n \in \{n_1, \ldots, n_2\}$, the marked point $y_j \in e_j$ is at height $\sqrt{n}$  above the real line and the $\pi$ rotation about $y_j$ is
		\[h_j:=\frac{1}{\sqrt{n}} \left(
		\begin{array}{cc}
		m_j & -(m_j^2+n) \\
		1 & -m_j \\
		\end{array}
		\right).\]
		Note that $h_j \in \Gamma_J$ has odd word length for all $j \in \ZZ$ (it is the conjugate of a generator) and hence  $h_kh_l \in \Gamma^{(2)}_J$ for all $k,l \in \ZZ$.
		
		\item $L:=[m_{N+2}, m_0]$ is a fundamental interval for $\Gamma_J$.
	\end{enumerate}
\end{lem}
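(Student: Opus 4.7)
The strategy is to derive all five parts from a single inductive unfolding of $J$ around the cusp at $\infty$, combining Proposition \ref{p:basic} (for the local shape of triangles with a vertex at $\infty$), Proposition \ref{p:sidelabel} (for (1)), and the earlier proposition giving $\imath_{N+1}\imath_N\cdots\imath_0 = T^L$ with $L = \sum_{i=1}^s m_i(2+n_i)$. Part (1) is just Proposition \ref{p:sidelabel} applied to each $n \in \{n_2,\ldots,n_s\}$.

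For (2) and (3), I would unfold $J$ around $\infty$ step by step. The two vertical sides of $J$ itself are $e_0 = s_0$ and $e_{-1} = s_{N+1}$. Applying $\imath_0$ flips $J$ across $s_0$ to produce $\imath_0(J)$; since $\imath_0$ swaps $v_0 = \infty$ with $v_1$, the image $\imath_0(J)$ still has $\infty$ as a vertex (now corresponding to $v_1$ of $J$), with vertical sides $e_0$ and $\imath_0(s_1)$; the latter is therefore $e_1$ and a lift of $s_1$. By induction, the $j$th copy of $J$ lying to the left of $J$ at $\infty$ is $\imath_0 \imath_1 \cdots \imath_{j-1}(J)$, has $\infty$ corresponding to $v_j$ of $J$, and has $e_j = \imath_0 \imath_1 \cdots \imath_{j-1}(s_j)$ as its leftmost vertical side, which is a lift of $s_{j \bmod (N+2)}$. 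After $N+2$ steps one obtains $\imath_0 \imath_1 \cdots \imath_{N+1}(J) = T^{-L}(J)$, closing one period and proving (2). For (3), the horizontal extent $m_{j-1}-m_j$ of the $j$th copy at $\infty$ is the sum of the horizontal extents at $\infty$ of the triangles of that copy meeting $\infty$; by Proposition \ref{p:basic}(2)--(4), each $\Delta^{(n)}$ triangle contributes exactly $n$ or $1$ according to whether the vertex at $\infty$ lies between two type-$n$ sides or between a type-$1$ and a type-$n$ side. This is precisely the $J$-width contribution, so the total is $\JW(v_{j \bmod (N+2)})$.

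For (4), Proposition \ref{p:basic}(5) gives $y_j = m_j + \sqrt{n}\,i$, and the matrix formula is the standard expression for the $\pi$-rotation about $a + b i$ specialized to $a = m_j$, $b = \sqrt{n}$. That $h_j \in \Gamma_J$ follows from (2): with $g = \imath_0 \imath_1 \cdots \imath_{j-1} \in \Gamma_J$ satisfying $e_j = g(s_i)$, we have $h_j = g \imath_i g^{-1}$, a conjugate of a single generator, hence of odd word length, so $h_k h_l$ has even word length and belongs to $\Gamma^{(2)}_J$. Part (5) is a telescoping of (3): $m_0 - m_{N+2} = \sum_{j=1}^{N+2}(m_{j-1}-m_j) = \sum_{i=0}^{N+1}\JW(v_i) = L$, and since $T^L$ is the smallest positive translation in $\Gamma_J$, the interval $[m_{N+2}, m_0]$ is a fundamental interval for $\Gamma_J$. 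The principal delicate point is the combinatorial bookkeeping in (2)--(3): one must verify that successive flips step consistently in the same direction around the cusp so that the side labels cycle through $s_0, s_1, \ldots, s_{N+1}$ in the correct cyclic order, and that after $N+2$ flips one lands precisely on $T^{-L}(J)$ rather than on a fractional translate; both follow from $J$ being a genuine fundamental domain and from $\imath_0, \ldots, \imath_{N+1}$ freely generating $\Gamma_J$.
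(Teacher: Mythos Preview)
Your proposal is correct and follows precisely the approach the paper indicates: the paper's entire proof is the sentence ``By looking at the lifts of the fundamental domain $J$ in the developing map, from the definition of the $J$-widths, and by propositions \ref{p:basic} and \ref{p:sidelabel}, we easily get,'' and you have simply written out in detail what that unfolding argument looks like. Your explicit identification of the $j$th copy as $\imath_0\imath_1\cdots\imath_{j-1}(J)$, the verification that $e_j = \imath_0\cdots\imath_{j-1}(s_j)$, and the telescoping for (5) are exactly the developing-map bookkeeping the paper is gesturing at.
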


%\noindent {\bf Notation} We will call sides of $J$, or $\QQQ_J$  with label in the set $\{n_i,1/n_i\}$ {\em type $n_j$} sides. So for example $\SSS(1,2)$ jigsaws  can have only type 1 or type 2 sides, and $\SSS(1,3)$ jigsaws can have only type 1 or type 3 sides.

%We will show that all $\SSS(1,2)$ jigsaw groups $\Gamma_J$  are non-arithmetic, while for $\SSS(1,3)$ jigsaw groups, we will show the existence of arithmetic $\Gamma_J$ and also infinitely many pseudomodular $\Gamma_J$.

\subsection{Non-arithmeticity of $\SSS(1,2)$ Jigsaws}

We first recall that if $T$ is a triangulation of a polygon $P$ with $n \ge 4$ sides, a triangle $\Delta \subset T$ is a {\em ear} if two of its sides are sides of $P$. We call the vertex $v$ between these two sides the {\em tip} of the ear, so $\wt(v)=1$. Every triangulation of $P$ has at least two ears. 

\begin{prop}\label{p:nonarith12}
	For any  $\SSS(1,2)$-jigsaw $J$, the group $\Gamma_J$ is non-arithmetic.
\end{prop}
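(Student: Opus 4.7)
The plan is to apply Takeuchi's arithmeticity criterion recalled at the start of Section~\ref{s:non-arith}: since $\Gamma_J \le \PGLtwoQ$ automatically has invariant trace field $\QQ$, it suffices to produce some $\gamma \in \Gamma_J$ with $(\tr\gamma)^2 \notin \ZZ$. The source of non-integrality must come from the factor $1/\sqrt{2}$ that appears in the matrices of type-$2$ involutions but is absent in type-$1$ ones.

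The cleanest family of candidates comes from Lemma~\ref{lem:verticalsides}: the involutions $h_j \in \Gamma_J$ at marked points of vertical sides $e_j$ of $\QQQ_J$ at $\infty$ are given explicitly, and for any two of them a direct computation yields
\[
\tr(h_a h_b) \;=\; -\frac{(m_a - m_b)^2 + n_a + n_b}{\sqrt{n_a n_b}},
\qquad
(\tr h_a h_b)^2 \;=\; \frac{((m_a - m_b)^2 + n_a + n_b)^2}{n_a n_b},
\]
with $n_a, n_b \in \{1,2\}$ for $\SSS(1,2)$-jigsaws. Reading this modulo $\ZZ$, $(\tr h_a h_b)^2 \notin \ZZ$ precisely when either (i) the types are mixed and $m_a - m_b$ is even, or (ii) both types equal $2$ and $m_a - m_b$ is odd. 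Each $h_j h_{j'}$ lies in $\Gamma^{(2)}_J$ since each $h_j$ has odd word length, so the proof reduces to the combinatorial claim that in every $\SSS(1,2)$-jigsaw some pair of vertical sides of $\QQQ_J$ realises (i) or (ii).

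For the combinatorial claim I would analyze the cyclic sequence of vertical sides at the cusp. Since $J$ contains tiles of both types and $\HH/\Gamma_J$ has a single cusp, translates in $\Gamma_J$ bring tiles of both types into the fan at $\infty$, and Proposition~\ref{p:basic} forces the $\Delta^{(2)}$-tiles there to occur in triples with vertical sides at positions $(m, m+1, m+3, m+4)$ and types $(1,2,2,1)$. Whenever such a triple is cyclically adjacent to a $\Delta^{(1)}$-block (or to the type-$1$ boundary of another triple) we obtain a type-$1$ vertical side and a type-$2$ vertical side with $m$-difference $\pm 2$, which is configuration~(i). In the borderline case where both vertical boundary sides of $J$ are type $2$ --- as in a $J_A$-style jigsaw with an interior $\Delta^{(1)}$-tile surrounded by $\Delta^{(2)}$-tiles --- I would instead compute $(\tr h_0 h_{-1})^2 = (L^2 + 4)^2/4$ with $L = 3 m_1 + 4 m_2$, which is non-integral whenever $m_1$ is odd.

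The main obstacle I anticipate is the remaining sub-case in which $\partial J$ consists entirely of type-$2$ sides and $L$ is even, so that $m_1$ is even. Here neither of the two easy configurations above is automatically available, and in principle the $J$-widths at all $\partial J$-vertices could be even, keeping the $m_j$ of all type-$2$ sides fixed in one parity class. Overcoming this will require applying Lemma~\ref{lem:verticalsides} to $\Gamma_J$-translates of the non-vertical sides of $J$ to produce additional vertical sides of $\QQQ_J$ with new $m$-parities, together with a combinatorial check that the adjacency rules of Proposition~\ref{p:basic} prevent a perfect alignment of types with parities around the cusp for any $\SSS(1,2)$-jigsaw. I expect this parity obstruction to be the technical heart of the proof.
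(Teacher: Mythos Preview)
Your trace computation and the parity reading are correct and match the paper's: arithmeticity would force the $m_j$ of type-$1$ and type-$2$ vertical sides of $\QQQ_J$ into opposite residue classes mod $2$, equivalently the $J$-width at each boundary vertex of $J$ must be even when the two incident sides have the same type and odd otherwise (call such a $J$ \emph{admissible}). But your combinatorics does not close. The triple structure you analyse lives in $\TTT_J$, whereas the $h_j$ of Lemma~\ref{lem:verticalsides} lie in $\Gamma_J$ only when the vertical side is \emph{exterior}; the four sides at $(m,m+1,m+3,m+4)$ are in general a mix of interior and exterior and cannot be paired freely to produce elements of $\Gamma_J$. More importantly, the sub-case you yourself flag is genuine: the single pair $(e_0,e_{-1})$ tests only the parity of $L$, not of the individual $J$-widths, and the ``further translates of non-vertical sides'' you propose are already exactly the $e_j$ of Lemma~\ref{lem:verticalsides}, so they add nothing new. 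What must be shown is that \emph{no} $\SSS(1,2)$-jigsaw is admissible, and your case analysis does not establish this.

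The paper's missing idea is a short ear-removal induction. If $J$ were admissible, a $\Delta^{(1)}$ ear would be impossible, since its tip has $J$-width $1$ between two type-$1$ sides; hence every ear of $J$ is a $\Delta^{(2)}$ tile. One then checks directly (three small cases, according to which side of $\Delta^{(2)}$ is interior) that deleting a $\Delta^{(2)}$ ear again yields an admissible jigsaw. Iterating strips off all of the finitely many $\Delta^{(2)}$ tiles while admissibility persists, but $J$ contains at least one $\Delta^{(1)}$ tile, which must eventually become an ear --- a contradiction. This induction replaces your case analysis entirely and in particular disposes of the sub-case you were stuck on without any further parity bookkeeping.
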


\begin{proof}  Let $J$ be a $\SSS(1,2)$-jigsaw of size $|J|=N$  and for $i=0, \ldots, N+1$ and  $j \in \ZZ$, let $v_i, s_i$, $k_i$, $e_j$ and $h_j$ be defined as in Lemma \ref{lem:verticalsides}. By lemma \ref{lem:verticalsides}(1) and (2), some side $e_j$ is type 2. Hence, the rotation in the side $e_j$ is 
	\[
	h_j = 
	\frac{1}{\sqrt{2}} \left( \begin{array}{cc}
	m_j & -m_j^2-2\\
	1 & -m_j\\
	\end{array} \right)
	\]
	Suppose that $e_k$ is type 1 for $k \neq j$. A direct computation using Lemma \ref{lem:verticalsides} (4) shows that the rotation in the side $e_k$ is
	\[
	h_k = 
	\left( \begin{array}{cc}
	m_k & -m_k^2-1\\
	1 & -m_k\\
	\end{array} \right)
	\]
	Then $(\tr h_jh_k)^2 = \frac{1}{2}(-(m_j-m_k)^2 -3)^2 \in \ZZ$ if and only if $(m_j-m_k)^2 \equiv 1 \mod 2$ if and only if $m_j - m_k \equiv 1 \mod 2$. 
	\newline
	Similarly, if $e_k$ is type 2 for $k \neq j$, then the rotation in the side $e_k$ is
	\[
	h_k = 
	\frac{1}{\sqrt{2}} \left( \begin{array}{cc}
	m_k & -m_k^2-2\\
	1 & -m_k\\
	\end{array} \right)
	\]
	Then $(\tr h_jh_k)^2 = \frac{1}{4}(-(m_j - m_k)^2 -4)^2 \in \ZZ$ if and only if $(m_j-m_k)^2 \equiv 0 \mod 4$ if and only if $m_j - m_k \equiv 0 \mod 2$. 
	
	Hence, if $\Gamma_J$ is arithmetic,  the parity (mod 2) of the type 1 and type 2 vertical sides $e_j$  must be different. This immediately shows that many jigsaws are not arithmetic. For example,  if the jigsaw has a {\em ear} which is a $\Delta^{(1)}$ tile, then it cannot be arithmetic since the $J$-width between the two unmatched sides is 1 and they are both type 1. By Lemma \ref{lem:verticalsides}(3), this means there are two successive sides $e_k$, $e_{k+1}$ of type 1 and $m_k=m_{k+1}+1$. Then either $m_k$ or $m_{k+1}$ has the same parity as $m_j$ where $e_j$ is type 2, so either $(\tr h_jh_k)^2$ or $(\tr h_jh_{k+1})^2$  is not integral. 
	
	So for a jigsaw $J$ to be arithmetic, the $J$-width at each vertex $v_i$ (which lies between $s_{i-1}$ and $s_i$) has to be:
	\begin{itemize}
		\item even, if $s_{i-1}$ and $s_i$ are same types;
%		if $k_{i-1}$ and $k_{i}$ are both the same type, then the $J$-width of $v_i$ is even;
		\item odd, if $s_{i-1}$ and $s_i$ are different types.
%		if   $k_{i-1}$, $k_{i}$ are of different types, then the $J$-width of $v_i$ is odd.
	\end{itemize}
	
	We call a $\SSS(1,2)$-jigsaw $J$ which satisfies the two conditions above admissable.	Now, suppose for a contradiction that there exists an arithmetic $\Gamma_J$, so $J$ is admissible. 
	Let $\Delta$ be a ear of $J$. By the earlier discussion $\Delta$ is a $\Deltatwo$ triangle. It is easy to check that $J \setminus \Delta$ is again admissible. Proceeding inductively leads to a contradiction since $J$ has a finite number of $\Deltatwo$ tiles and has at least one $\Deltaone$ tile.

%	Note that any triangulation of an ideal polygon must have at least two ears, so $J$ has at least two ears, and furthermore, as discussed earlier, the ears must be $\Delta^{(2)}$ tiles. Let $\Delta_{[w_1,w_2,w_3]}$ be a ear of $J$ with labels $k_1,k_2$ and $k_3$ where the side $[w_1,w_2]$ is in the interior of $J$ and let $J_1=J \setminus \Delta$. In creating $J_1$ we have removed two sides of $J$, added a new side, and removed the vertex $w_3$ and created two new vertices (at $w_1$ and $w_2$). By considering the cases where $k_1=1,2$ and $1/2$ separately, it is easy to see that the new jigsaw $J_1$ is also admissible. Proceeding inductively to define $J_2$, $J_3, \dots$ leads to a contradiction since there are only a finite number of $\Delta^{(2)}$ tiles in $J$ and eventually $J_r$ has a ear which is a $\Delta^{(1)}$ tile, so cannot be admissible. This completes the proof.
	
	\subsection{(Non)-arithmeticity of $\SSS(1,3)$ Jigsaws}
	
	\begin{prop}\label{p:nonarith13}
		For any  $\SSS(1,3)$-jigsaw $J$, if the group $\Gamma_J$ is arithmetic, then all the sides $s_i$ of $J$ must be type 3,  and the $J$-width at each vertex $v_i$ of $J$ is congruent to $0$ mod $3$.
	\end{prop}
	
	\begin{proof}
		This follows easily from Lemma \ref{lem:verticalsides} applied to $\SSS(1,3)$ jigsaws. Adopting the same notation, we first note that there exists a vertical side $e_j=[\infty, m_j]$ of $\QQQ_J$ of type 3. Then the $\pi$-rotation about the marked point on the side $e_j$ is 
		\[
		h_j = 
		\frac{1}{\sqrt{3}} \left( \begin{array}{cc}
		m_j & -m_j^2-3\\
		1 & -m_j\\
		\end{array} \right)
		\]		
		If some other vertical side $e_k=[\infty, m_k]$ is type 1, then a direct calculation shows that the $\pi$-rotation about the marked point on the side $e_k$ is 
		\[
		h_k = 
		\left( \begin{array}{cc}
		m_k & -m_k^2-1\\
		1 & -m_k\\
		\end{array} \right)
		\]	
		Then	 $(\tr h_jh_k)^2 = \frac{1}{3} (-(m_j - m_k)^2-4)^2 \not\in \ZZ$ (this depends on the fact that $3 \not\equiv 1 \mod 4$). 
		
		Similarly, if some other $e_k$ is type 3, then 
		\[
		h_k = 
		\frac{1}{\sqrt{3}} \left( \begin{array}{cc}
		m_k & -m_k^2-3\\
		1 & -m_k\\
		\end{array} \right)
		\]	
		Hence $(\tr h_jh_k)^2 = \frac{1}{9}(-(m_j-m_k)^2-6)^2 \in \ZZ$ if and only if $m_j-m_k \equiv 0 \mod 3$. 
		
		It follows that if $\Gamma_J$ is arithmetic, all sides $s_i$ of $J$ must be of type 3, and $\JW(v_i) \equiv 0 \mod 3$ for all vertices $v_i$ of $J$.
	\end{proof}
	
	It turns out that arithmetic $\SSS(1,3)$ jigsaws exist. We have:
	
	\begin{prop}\label{p:arithemtic13}
		The $\SSS(1,3)$ jigsaw $J_{A}$ with signature $(1,3)$ consisting of one $\Delta^{(1)}$ tile and  three $\Delta^{(3)}$ tiles, one attached to each of the three sides of the $\Delta^{(1)}$ tile, is arithmetic.
	\end{prop}
	
	\begin{proof}
		We can show by direct calculation that  $(\tr \imath_0\imath_i )^2\in \ZZ$, for $i=1,2,\ldots, 5$ where  $\Gamma_J^{(2)}=\langle \imath_0\imath_1, \ldots, \imath_0\imath_5 \rangle$. Hence, by \cite{HLM}, $\Gamma_J$ is arithmetic. We may also observe that $S_J$ has a six fold symmetry, under rotation by $\pi/3$ about the centroid of the the $\Delta^{(1)}$ tile, and that the quotient surface corresponds to the Hecke group $G_6$ which is known to be arithmetic.
		\end{proof}
	
	It follows that jigsaws composed of copies of $J_{A}$ glued along matching sides correspond to  finite covers of $S_{J_{A}}$, equivalently, to finite index subgroups of $\Gamma_{J_{A}}$, and are all arithmetic.
	In fact, the converse is also true. We will need the following:
	
	\begin{lem}\label{l:badvertices}
		Let $P$ be an ideal polygon with $n \ge 4$ sides and $T$ a triangulation of $P$. Then there exists at least two non-adjacent vertices $u$ and $v$ of $P$ with $\wt(u), \wt(v) \in \{2,3\}$  with respect to the triangulation.
	\end{lem}
	
	\begin{proof} 
		Without loss of generality, we may assume that $n \ge 5$, $n=4$ is trivial. Let $\Delta_1, \ldots, \Delta_a$ be the ears of $P$, and let $$P'=P \setminus\bigcup_{i=1}^a \Delta_i.$$
		If $P'$ consists of only one triangle, then $a=2$ or $3$ and a direct check shows the result holds. If $P'$ contains at least two triangles, then it has at least two ears (which are not ears of $P$) with tips $w_i$, $w_j$ which are not adjacent in $P'$. Then $w_i$ and $w_j$ are non-adjacent vertices of  $P=P' \cup \bigcup_{i=1}^a \Delta_i$ with $\wt(w_i), \wt(w_j) \in \{2,3\}$ as required.

%		Let $v_1, v_2,\ldots,  v_n$ be the vertices of $P$ in counter-clockwise order. Without loss of generality, we may assume that $\wt(v_1)=1$, so $\Delta_{[v_1,v_2,v_n]}$ is an ear of $T$. Let $\Delta_{[v_n,v_2,v_k]} \subset T$, where $2<k<n$, be the triangle adjacent to $\Delta_{[v_1,v_2,v_n]}$. We claim that $\wt(v_j)\in \{2,3\}$  for some  $2\le j <k$. If $\wt(v_2) \in \{2,3\}$, we are done. Otherwise, $\wt(v_2) \not\in \{2,3\}$, and there are at least two more triangles at the vertex $v_2$. If $\wt(v_3)=1$, then $v_4 \neq v_k$ (since $\wt(v_2) \ge 4$), and either $\wt(v_4) \in \{2,3\}$ or there are at least two more triangles at $v_4$. If $\wt(v_3)\neq 1$, and $j \neq 3$, there are at least three more triangles at $v_3$.  In other words, we can replace $v_2$ with either $v_3$ or $v_4$ and proceed inductively, without ever reaching $v_k$, if none of the vertices have weight in $\{2,3\}$. Since the polygon must close up, we conclude that at some point $\wt(v_j) \in \{2,3\}$ for some  $2\le j <k$. Going the other way around, and by the same argument, we  obtain another vertex $v_l$ where $k<l \le n$ with $\wt(v_l)\in \{2,3\}$. Thus $v_j$ and $v_l$ are non-adjacent vertices of $P$ with $\wt(v_j), \wt(v_l) \in \{2,3\}$ as required.
		\end{proof}

	\begin{prop}\label{p:arith13all}
		A $\SSS(1,3)$ jigsaw $J$ is arithmetic if and only if it consists of a finite number of $J_{A}$ pieces glued along matching sides.
	\end{prop}
	
	\begin{proof}
		We will show that an arithmetic jigsaw $J$ can be decomposed into polygons of certain types which we call blocks. Call a polygon $B$ assembled from $\Deltaone$ and $\Deltathree$ tiles glued along matching sides a {\em block} if
		\begin{enumerate}
			\item  all the sides of $B$ are type 3; and 
			\item \begin{enumerate}
				\item  [(i)] $B$ consists of only $\Delta^{(3)}$ tiles. In this case $B$ consists of quadrilaterals made up of pairs of $\Delta^{(3)}$ triangles  glued along the type 1 side; or
				\item [(ii)] $B$ contains a core $C$ consisting of  $n \ge 1$ $\Delta^{(1)}$ tiles,   with one $\Delta^{(3)}$ tile glued to each of the sides of $C$. In this case, all the ears of $B$ are $\Delta^{(3)}$ tiles and there are $n+2$ of them, and the weights of the vertices alternate between  1 and  $m \ge 3$.  
			\end{enumerate}
		\end{enumerate} 
		
		We call blocks of the first type $0$-blocks, and blocks of the second type $n$-blocks where $n$ is the number of tiles in the core $C$. In particular, a $1$-block is just $J_{A}$ and our aim will be to show that other types of blocks cannot occur. Note that by definition, all the sides of a block are of type $3$.
		
		We now show that an arithmetic $J$ can be decomposed into blocks defined above. By proposition \ref{p:arithemtic13}, all the sides of $J$ are of type 3 and by proposition \ref{p:nonarith13}, the $J$ widths of all vertices of $J$ are congruent to 0 mod 3. Also, by definition,  $J$ contains at least one $\Delta^{(1)}$ tile. We thus have a unique $n$-block $B_1$ (where $n \ge 1$) which contains this tile and $J \setminus B_1$ consists of a finite number of polygons whose sides are again all of type 3. Proceeding inductively, we see that $J$ can be decomposed into a finite number of blocks $B_1, B_2, \ldots B_n$. 
		
		Now for a block $B$, if $v$ is a vertex of $B$ with $J$-width $\JW(v)$, we say that $v$ is bad if $\JW(v) \not \equiv 0 \mod 3$ and good if $\JW(v) \equiv 0 \mod 3$. Then a $1$-block has only good vertices since all its six vertices have $J$-width $3$.
		
		 We claim that a $n$-block with $n \neq 1$ has at least two non-adjacent bad vertices. 
		 
		 This is certainly true for the $0$-blocks since each quadrilateral consisting of two $\Delta^{(2)}$ tiles glued along their one sides have two bad vertices which are not adjacent. Gluing a finite number of these quadrilaterals along matching sides will not reduce this number of bad vertices. Indeed, each gluing fixes at most one bad vertex but introduces at least one new bad vertex which will not be adjacent to the unfixed bad vertex. 
		 
		 For $n$-blocks $B$ where $n \ge 2$, we note that by Lemma \ref{l:badvertices}, the core $C$ has at least two vertices $u$ and $v$ with $J$-width equal to $2$ or $3$. Then the $J$-width of the corresponding vertices on $B$ will be $4$ or $5$ (we add 2 to the original widths coming from the $J$-widths of the ears). It follows that $B$ has at least two bad vertices which are not adjacent. 
		 
		 Hence,  if $J$ has an $n$-block $B$ with $n \neq 1$, then $B$ has at least two non-adjacent  bad vertices. If we glue a $J_A$ block to $B$, then the two bad vertices persist and remain non-adjacent, if we glue an $n$-block with $n \neq 1$ to $B$, then  we fix at most one bad vertex of $B$ but introduce at least one new bad vertex which will not be adjacent to the unfixed bad vertex of $B$. thus, adjoining another block to $B$ results in a jigsaw with at least two bad vertices which are not adjacent. The same argument holds when we glue another block to this new jigsaw. Hence, after a finite number of steps, we will always end up with a jigsaw $J$ which has at least two non-adjacent bad vertices. By proposition \ref{p:nonarith13},   $J$ cannot be arithmetic. Hence, an arithmetic $J$ consists only of $J_A$ blocks and the result follows.
		
		\end{proof}

%	 On the other hand, jigsaws which have a type 1 side cannot be arithmetic. By propositions \ref{p:cuspset13} and \ref{p:arithemtic13}, it follows that we can construct infinitely many $\SSS(1,3)$ jigsaws for which $\Gamma_J$ are pseudomodular.

\end{proof}

\section{Commensurability classes }\label{s:commensurabilty}
By Margulis's result \cite{Mar}, if $\Gamma < \PSLtwoR$ is non-arithmetic, then   $\hbox{Comm}(\Gamma)$ is the unique minimal element  in the commensurability class $[\Gamma]$, where
\[\hbox{Comm}(\Gamma)=\{g \in \PSLtwoR ~|~ \Gamma ~~\hbox{and} ~~g\Gamma g^{-1} ~~\hbox{are commensurable}\},\] and $\Gamma < \hbox{Comm}(\Gamma)$ with finite index. We will show that for all $\SSS(1,2)$ or $\SSS(1,3)$ jigsaws $J$ of signature $(r,1)$,
 $$\hbox{Comm}(\Gamma_J)=\Gamma_J$$ which will be enough to show that there are infinitely many commensurability classes of pseudomodular jigsaw groups. 
 
 If $\Gamma_J$ is a pseudomodular hyperbolic jigsaw group, let  $\Gamma'_J:=\hbox{Comm}(\Gamma_J)$ be the minimal element in the commensurability class of $[\Gamma_J]$, $S'_J$  the associated surface and $C$ the maximal horocycle on $S'_J$. Let $\widetilde{C}$ be the lift of  $C$  to $\HH$. By construction $\widetilde{C}$ is invariant under the action of $\Gamma'_J$. Also, since $S_J$ has only one cusp, the lift of the maximal horocyle of $S_J$ to $\HH$ is also $\widetilde{C}$. We have:
% Hence $\widetilde{C}$ (up to the action of $\PSLtwoR)$ on $\HH$) is an invariant of the commensurability class, and can be constructed by lifting the maximal horocycle on $S_J$. In particular, invariants of $\widetilde{C}$ are invariants of the commensurability class. It is now easy to construct infinity many commensurability classes of hyperbolic $\SSS(1,2)$-jigsaws and $\SSS(1,3)$-jigsaws. Indeed we will show that all $\SSS(1,2)$ and $\SSS(1,3)$ jigsaw groups of signature $(r,1)$ are the unique minimal elements in their commensurability classes, and hence, two such jigsaw groups are not commensurable if the jigsaws are not isometric.

\begin{prop}\label{p:commsurability12}
	There exists infinitely many commensurability classes of pseudomodular $\SSS(1,2)$-jigsaw groups.
\end{prop}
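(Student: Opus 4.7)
The plan is to exhibit an infinite family of pairwise non-commensurable $\SSS(1,2)$-jigsaw groups, using Margulis's rigidity theorem together with an area-divisibility argument based on the classification of small-area one-cusped Fuchsian orbifolds. Since every $\SSS(1,2)$-jigsaw group is pseudomodular by Propositions \ref{cuspset12} and \ref{p:nonarith12}, producing infinitely many non-commensurable such groups suffices.

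For each $r \ge 1$, I would take $J_r$ to be any $\SSS(1,2)$-jigsaw of signature $(r,1)$; concretely one may glue $r$ copies of $\Delta^{(1)}$ to a single $\Delta^{(2)}$ tile along matching sides. Then $\mathrm{area}(S_{J_r}) = (r+1)\pi$, and by Proposition \ref{p:nonarith12} the group $\Gamma_{J_r}$ is non-arithmetic. Margulis's theorem therefore guarantees that $\Gamma'_r := \mathrm{Comm}(\Gamma_{J_r})$ is a Fuchsian group containing $\Gamma_{J_r}$ as a finite-index subgroup of index $m_r$; since $S_{J_r}$ has a unique cusp, so does $\HH/\Gamma'_r$, and $\mathrm{area}(\HH/\Gamma'_r) = (r+1)\pi/m_r$.

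Restrict now to the subsequence $r = p - 1$ with $p > 3$ prime. Write $\mathrm{area}(\HH/\Gamma'_r)/\pi = a/b$ in lowest terms. From $m_r \cdot a = pb$ and $\gcd(a,b)=1$ we obtain $a \mid p$, so $a \in \{1, p\}$. If $a = 1$, then $\mathrm{area}(\HH/\Gamma'_r) = \pi/b$ and, since the minimum area of a one-cusped hyperbolic Fuchsian orbifold in $\PSLtwoR$ is $\pi/3$ (attained by $\PSLtwoZ$), only $b \in \{1, 2, 3\}$ can arise. A Gauss--Bonnet enumeration identifies the one-cusped Fuchsian orbifolds of areas $\pi/3$, $\pi/2$, $\pi$ as, respectively, $\PSLtwoZ$; the orientation-preserving $(2,4,\infty)$ triangle group; and the three groups $(2,2,2,\infty)$, $(3,6,\infty)^+$, $(4,4,\infty)^+$ of area $\pi$. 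All five are arithmetic, so $\Gamma_{J_r}$, being a finite-index subgroup of $\Gamma'_r$, would itself be arithmetic, contradicting Proposition \ref{p:nonarith12}. Hence $a = p$, and $\mathrm{area}(\HH/\Gamma'_r) = p\pi/b_p$ with $\gcd(p, b_p) = 1$.

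Finally, for distinct primes $p \ne q$ both greater than $3$, commensurability of $\Gamma_{J_{p-1}}$ and $\Gamma_{J_{q-1}}$ would force $\Gamma'_{p-1}$ and $\Gamma'_{q-1}$ to coincide up to conjugation in $\PSLtwoR$, giving $p\pi/b_p = q\pi/b_q$ and hence $pb_q = qb_p$. Since $\gcd(p,b_p) = \gcd(q,b_q) = 1$ and $p, q$ are distinct primes, this is impossible. Therefore $\{\Gamma_{J_{p-1}} : p > 3 \text{ prime}\}$ represents infinitely many distinct commensurability classes of pseudomodular $\SSS(1,2)$-jigsaw groups. The delicate step is confirming the arithmeticity of all one-cusped Fuchsian orbifolds of area at most $\pi$; the horocycle-based approach attributed to Leininger in the acknowledgments presumably streamlines this by providing a direct geometric lower bound on $\mathrm{area}(\HH/\Gamma'_r)$, but the argument above suffices for the conclusion.
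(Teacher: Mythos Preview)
Your strategy is genuinely different from the paper's. The paper never bounds the index $[\Gamma'_r:\Gamma_{J_r}]$ via area at all; instead it looks at the maximal horocycle $C$ on $S_{J_r}$, lifts it to $\widetilde C\subset\HH$, and observes that the pattern of self-tangencies of the horizontal component of $\widetilde C$ (two tangency points per fundamental interval, cutting it into pieces of lengths $\sqrt2$ and $(3r+2)/\sqrt2$) is not invariant under any horizontal translation shorter than $3r+4$. Since $\widetilde C$ is invariant under $\Gamma'_r$, this forces $\Gamma'_r=\Gamma_{J_r}$, so distinct $r$ give distinct commensurability classes directly. Your area-divisibility argument is more indirect but in principle perfectly viable, and has the appeal of being purely combinatorial once the small-area classification is in hand.

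There is, however, a factual error at exactly the point you flag as delicate. The triangle group $(3,6,\infty)^+$ is \emph{not} arithmetic: taking generators $x$ of order $3$ and $y$ of order $6$ with $xy$ parabolic, one has $x\in\Gamma^{(2)}$ (since $x=(x^2)^2$) and $y^2\in\Gamma^{(2)}$, and a direct computation gives $\tr(xy^2)=-(1+2\sqrt3)$, so the invariant trace field is $\QQ(\sqrt3)$. (Equivalently, $(3,6,\infty)$ does not appear on Takeuchi's list of arithmetic triangle groups.) Thus your five-orbifold list contains a non-arithmetic member, and the contradiction in the case $a=1$, $b=1$ does not follow as written.

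The repair is immediate and fits your framework: the invariant trace field of $\Gamma_{J_r}$ is $\QQ$ (every generator is an integer matrix or $\tfrac{1}{\sqrt2}$ times one, so all squared traces are rational), and the invariant trace field is a commensurability invariant, so $\Gamma'_r$ also has invariant trace field $\QQ$. This rules out $(3,6,\infty)^+$ without appealing to arithmeticity. Alternatively, a Riemann--Hurwitz count over the order-$6$ cone point shows that $S_{J_r}$, with its $r+3=p+2$ cone points of order $2$, cannot cover the $(3,6,\infty)$ orbifold with degree $p$: each order-$2$ point would contribute local degree $3$, giving total degree at least $3(p+2)>p$. Either observation closes the gap, after which your divisibility argument goes through.
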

	\begin{prop}\label{p:commsurability123}
		 There exists infinitely many commensurability classes of pseudomodular $\SSS(1,3)$-jigsaw groups and infinitely many commensurability classes of $\SSS(1,3)$-jigsaw groups with specials (so are neither arithmetic nor pseudomodular).
	
\end{prop}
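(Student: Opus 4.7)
The proof parallels Proposition \ref{p:commsurability12}. I will construct two infinite families of $\SSS(1,3)$-jigsaws---one pseudomodular, one with specials---and apply Margulis' theorem \cite{Mar} together with the maximal-horocycle argument (attributed to Leininger in the acknowledgments) to show that for each group $\Gamma$ in these families, $\hbox{Comm}(\Gamma) = \Gamma$. Once this is known, two such groups are commensurable iff they are $\PSLtwoR$-conjugate, which forces equal area of the quotient surfaces; since the constructions realize infinitely many distinct tile counts (hence areas), we obtain infinitely many commensurability classes in each family.

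For the pseudomodular family, for each $r \ge 2$ let $J_r$ be the $\SSS(1,3)$-jigsaw of signature $(r, 1)$ obtained by attaching the unique $\Delta^{(3)}$-tile to a $\Delta^{(1)}$-tile along type-$1$ sides (normalized so that $\Delta^{(3)}$ has $\infty$ as a vertex in $\TTT_{J_r}$) and chaining $r - 1$ additional $\Delta^{(1)}$-tiles along type-$1$ sides. By the remark following Theorem \ref{thm:S13}, $\Gamma_{J_r}$ is arithmetic or pseudomodular, and Proposition \ref{p:arith13all} rules out arithmeticity since signature $(r, 1)$ is never a positive-integer multiple of the $J_A$ signature $(1, 3)$. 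For the non-pseudomodular family, by the analysis of $\Gamma(1,1/3,3)$ in \S\ref{s:onethreejigsaws} there exists a base $\SSS(1,3)$-jigsaw $J'_0$ whose surface supports a closed geodesic with $W$-cutting sequence $\overline{3,1,1/3,1}$ (constructed from a ``diamond'' of two $\Delta^{(3)}$-tiles glued along their type-$1$ sides, supplemented by auxiliary tiles to close up the axis). For each $n \ge 0$, let $J'_n$ be obtained by chaining $n$ additional $\Delta^{(1)}$-tiles onto an exterior type-$1$ side of $J'_0$; the closed geodesic persists, so by Theorem \ref{thm:S13}, each $\Gamma_{J'_n}$ is neither arithmetic nor pseudomodular.

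Each $\Gamma_{J_r}$ and $\Gamma_{J'_n}$ is non-arithmetic, so by Margulis' theorem, $\hbox{Comm}(\Gamma)$ is the unique minimal element of the commensurability class and $\Gamma$ has finite index in it. Since $S_\Gamma$ has a single cusp, so does $S'_\Gamma := \HH/\hbox{Comm}(\Gamma)$, and the maximal horocycle lifts to a horizontal line $y = h \subset \HH$ preserved by $\hbox{Comm}(\Gamma)$. Propositions \ref{p:integercusps} and \ref{p:nextcusps} identify $h = \sqrt{3}$: in the $\PGLtwoQ$-form $g = (1/\sqrt D)\left(\begin{smallmatrix} a & b \\ c & d\end{smallmatrix}\right)$ of any element sending $\infty$ to a finite cusp, the embeddedness constraint $h \ge \sqrt{D}/|c|$ is maximised at $\sqrt{3}/1 = \sqrt{3}$, attained on the integer cusps reached by the type-$3$ sides of the $\Delta^{(3)}$-tile at $\infty$. \textbf{Main obstacle:} one must rule out any element of $\hbox{Comm}(\Gamma) \setminus \Gamma$ that raises the maximal horocycle above $y = \sqrt{3}$. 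Following Leininger's maximal-horocycle argument (parallel to Proposition \ref{p:commsurability12}), the integrality structure of $\Gamma \subset \PGLtwoQ$ together with the cusp geometry at $\infty$ forces any commensurating element of $\PSLtwoR$ to preserve the constraint $h = \sqrt{3}$, yielding $\hbox{Comm}(\Gamma) = \Gamma$. Granting this, commensurable groups are $\PSLtwoR$-conjugate, so their quotient surfaces have equal area $\pi \cdot (\text{tile count})$. Since the tile count $r+1$ varies over infinitely many $r$, and likewise the tile count of $J'_n$ grows unboundedly with $n$, we obtain infinitely many commensurability classes in each family, completing the proof.
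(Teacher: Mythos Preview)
Your overall architecture matches the paper's: build signature-$(r,1)$ jigsaws for the pseudomodular family, attach $\Delta^{(1)}$ tiles to a fixed base containing the special geodesic for the non-pseudomodular family, invoke Margulis, and separate commensurability classes by area. The constructions and the non-arithmeticity/pseudomodularity checks are fine.

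The gap is in the step ``$\hbox{Comm}(\Gamma)=\Gamma$''. You write that the integrality structure forces any commensurating element to ``preserve the constraint $h=\sqrt{3}$'' and that this yields $\hbox{Comm}(\Gamma)=\Gamma$. But preserving the height of the maximal horocycle is automatic: since $S_J$ and $S'_J=\HH/\hbox{Comm}(\Gamma_J)$ each have a single cusp, their maximal horocycles lift to the \emph{same} set $\widetilde{C}\subset\HH$, which is therefore $\hbox{Comm}(\Gamma_J)$-invariant by construction. Nothing further follows from $h=\sqrt{3}$ alone. Your ``main obstacle'' is also misidentified: there is no question of an element of $\hbox{Comm}(\Gamma)\setminus\Gamma$ raising the horocycle.

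What the paper actually uses is the \emph{tangency pattern} of $\widetilde{C}$ along the horizontal horocycle $y=\sqrt{3}$. For a signature-$(r,1)$ jigsaw the only type-$3$ exterior sides are the two type-$3$ sides of the single $\Delta^{(3)}$ tile; their vertical lifts are $3$ apart, so the horizontal horocycle has exactly two tangency points per fundamental interval of length $L=3r+5$, dividing it into arcs of horocyclic lengths $3/\sqrt{3}$ and $(3r+2)/\sqrt{3}$. Now if $\hbox{Comm}(\Gamma_J)\neq\Gamma_J$, then because both quotients have a single cusp the cover $S_J\to S'_J$ is totally ramified at the cusp, so $T^k\in\hbox{Comm}(\Gamma_J)$ for some $0<k<L$; but no such translation can preserve that asymmetric two-point pattern. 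This is the content of ``Leininger's argument'' you invoke, and it is the piece your write-up omits. The same reasoning, with the tangency count coming from the fixed number of type-$3$ exterior sides of the base piece, handles the non-pseudomodular family $J'_n$; you should also make the base construction explicit (the paper uses three $\Delta^{(3)}$ tiles as in Figure~\ref{fig:nonpseudomodular}, with a free type-$1$ side for attaching the $\Delta^{(1)}$ chain) rather than the vague ``diamond supplemented by auxiliary tiles''.
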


\begin{proof}(Propositions \ref{p:commsurability12} and \ref{p:commsurability123})
First consider an $\SSS(1,2)$-jigsaw $J$ of signature $(r,1)$. The fundamental interval has (euclidean) length $3r+4$ and $T^{3r+4} \in \Gamma_J$. The maximal horocycle $C$   on $S_J$ has (hyperbolic) length $(3r+4)/\sqrt{2}$ and intersects itself at the fixed points of the sides labeled by $2$ and $1/2$. It decomposes into 2 pieces, a short piece of hyperbolic length $\sqrt{2}$ (the length of the horocyclic segment joining $a+i\sqrt{2}$ to $a+2+i\sqrt{2}$), and a long piece of hyperbolic length $(3r+2)/\sqrt{2}$, where $(\infty, a)$ and $(\infty, a+2)$ are the lifts of two successive vertical sides of $\TTT$ of type $2$. Then $\widetilde{C}$ consists of the horizontal horocyle at height $\sqrt{2}$, ``large horocycles'' of diameter $\sqrt{2}/2$ based at the points $a+n(3r+4), a+2+n(3r+4) \in \RR$, where $n \in \ZZ$, which are tangent to the horizontal horocyle, and smaller horocycles based at the other cusps which are disjoint from the horizontal horocycle. In other words, the horizontal lift of $C$ has exactly two tangency points in each fundamental interval dividing it into two pieces of length $\sqrt{2}$ and $(3r+2)/\sqrt{2}$ respectively.
 We claim that $\Gamma_J=\Gamma'_J$. If not,  since $\Gamma_J$ has only one cusp,  $\widetilde{C}$ is invariant under a horizontal translation by $k$ where $0<k<3r+4$ but this is impossible as such a translation cannot preserve the pattern of tangency points at the horizontal lift of $C$.
   It follows that each $\SSS(1,2)$-jigsaw group $\Gamma_J$ of signature $(r,1)$ is the unique minimal element in its conjugacy class and that two non-isometric jigsaws with only one $\Deltatwo$ tile gives rise to pseudomodular groups in different commensurability classes. This  gives infinitely many commensurability classes of pseudomodular groups. 

The same argument with minor modifications works for $\SSS(1,3)$-jigsaws. We only need to note that if $J$ is a $\SSS(1,3)$ jigsaw of signature $(r,1)$, then it must be pseudomodular since it does not contain a geodesic with cutting sequence $\overline{1,3,1,1/3}$, and it has a side of type 1.  To show the existence of infinitely many commensurability classes of non-pseudomodular $\SSS(1,3)$-jigsaw groups, we only need to start with a jigsaw $J'$ consisting of three $\Delta^{(3)}$ triangles such that there exists a geodesic $\gamma$ on $S_{J'}$ with $W$ cutting sequence $\overline{1,1/3,1/3}$, and $J'$ has a side  which does not intersect $\gamma$ with label 1, see figure \ref{fig:nonpseudomodular}. We then glue another jigsaw $J_n$ to $J'$ along this side, where $J_n$ consists of $n$ $\Delta^{(1)}$ tiles. The jigsaw groups constructed will all be non-arithmetic and non-pseudomodular. Again, the resulting jigsaw groups coming from $J_n'=J'\cup J_n$  lie in infinitely many commensurability classes, by a similar argument.
\end{proof}

%%%%%%%%%%%%%%%%%%%%%%%%%%%%%%%%%%%%%%%%%%%%%%%%%%%%%%%%%
\begin{figure}[hbt]\centering{\includegraphics[height=6cm]{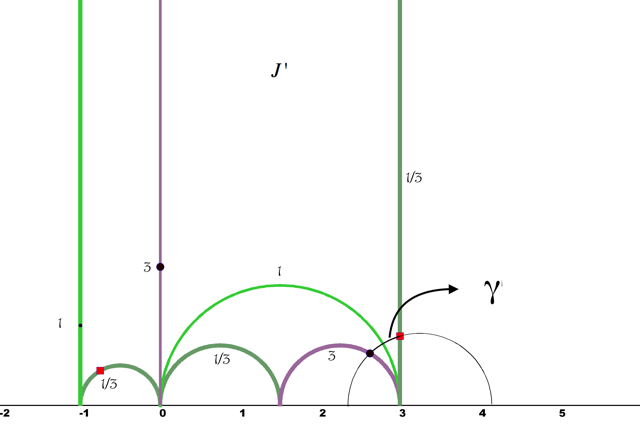}}
	\caption{The hyperbolic jigsaw $J'$ consisting of three $\Delta^{(3)}$ tiles and the geodesic arc on $\Gamma_{J'}$ which has cutting sequence $\overline{3,1,1/3,1}$.}  
	\label{fig:nonpseudomodular} 
\end{figure}
%%%%%%%%%%%%%%%%%%%%%%%%%%%%%%%%%%%%%%%%%%%%%%%%%%%%%%%%%

\section{Conclusion}\label{s:conclusion} Propositions \ref{cuspset12}, \ref{p:cuspset13}, \ref{p:nonarith12}, \ref{p:nonarith13}, \ref{p:commsurability12} and \ref{p:commsurability123} now imply  Theorems \ref{thm:S12} and \ref{thm:S13} which in turn both imply Theorem \ref{thm:Main}. One sees that integral hyperbolic jigsaws provide a rich source of pseudomodular groups. We will be exploring this in a subsequent paper \cite{LTV} where we will show that for any integral hyperbolic jigsaw set $\SSS$, there are infinitely many commensurability classes of pseudomodular $\SSS$-jigsaw groups. We will also be exploring the related pseudo-Euclidean algorithm as well as the generalized continued fraction expansion for these groups as well as for the integral Weierstrass groups. Many questions remain, we conclude with a partial list:
\begin{enumerate}
	\item Can one give sharp invariants that will distinguish the commensurability classes of the pseudomodular groups, for example, by using the associated labeled graph $G_J$ of $J$. In particular, we may define an $\SSS$-jigsaw to be prime if it cannot be decomposed into  pieces all of which are isometric. Is it then true that $\hbox{Comm}(\Gamma_J)=\Gamma_J$ if and only if $J$ is prime? 
	
	\item Can one find integral jigsaw groups where the set of fixed points of the hyperbolics is exactly the set of quadratic irrationals? Or where some quadratic irrational does not lie in this set? More generally, what can be said about the fixed point set of the hyperbolics for a given jigsaw group.
	
	\item Are there only finitely many pseudomodular Weierstrass groups $\Gamma(1,1/n,n)$ where $n \in \NN$? Which Weierstrass groups are arithmetic?
	\item What integral jigsaw groups can be arithmetic? It seems likely that the arithmetic integral jigsaw groups only come from a small finite collection of jigsaw sets. If so, can we determine this collection completely?
	\item One can consider jigsaw tiles $\Delta(k_1,k_2,k_3)$ where $k_i \in \QQ[i]$ which results in groups $\Gamma_J \in \PSLtwoC$. What can we say about these groups? Is it possible to construct a jigsaw with cusp set $\QQ[i]\cup \{\infty\}$?
	 
\end{enumerate}

\end{document}